\theoremstyle{plain}
\newtheorem{theorem}{Theorem}
\newtheorem{proposition}{Proposition}
\newtheorem{lemma}{Lemma}
\newtheoremstyle{newremark}
{\topsep}   
{\topsep}   
{\upshape}  
{0pt}       
{\itshape}  
{.}         
{5pt plus 1pt minus 1pt} 
{\thmname{#1}\thmnumber{ \itshape#2}\thmnote{ (#3)}}
\theoremstyle{newremark}
\newtheorem{assumption}{Assumption}
\newtheorem{remark}{Remark}
\newcommand\munderbar[1]{\underaccent{\bar}{#1}}
\newcommand{\R}{\mathbb{R}}
\title{Estimation and convergence rates in the \\ distributional single index model}
\author{Fadoua Balabdaoui\footnote{Corresponding author, email: fadouab@ethz.ch}, Alexander Henzi, and Lukas Looser \\[0.5em]
Seminar for Statistics, ETHZ, R\"amistrasse 101, 8092, Z\"urich} 
\date{\today}
\begin{document}
\maketitle

\begin{abstract}
The distributional single index model is a semiparametric regression model in which the conditional distribution functions $P(Y \leq y | X = x) = F_0(\theta_0(x), y)$ of a real-valued outcome variable $Y$ depend on $d$-dimensional covariates $X$ through a univariate, parametric index function $\theta_0(x)$, and increase stochastically as $\theta_0(x)$ increases. We propose least squares approaches for the joint estimation of $\theta_0$ and $F_0$ in the important case where $\theta_0(x) = \alpha_0^{\top}x$ and obtain convergence rates of $n^{-1/3}$, thereby improving an existing result that gives a rate of $n^{-1/6}$. A simulation study indicates that the convergence rate for the estimation of $\alpha_0$ might be faster.  Furthermore, we illustrate our methods in an application on house price data that demonstrates the advantages of shape restrictions in single index models. \\[0.5em]
\textbf{Keywords} \ monotone regression, isotonic distributional regression, single index model
\end{abstract}

\section{Introduction} \label{sec:introduction}
Consider the classical regression framework in which one aims to predict a response variable $Y \in \mathbb{R}$ with covariates $X \in \mathcal{X} \subseteq \mathbb{R}^d$. The popular generalized linear models (GLMs) assume that
\[
    \mathbb{E}[Y|X=x] = g_{\phi}(\alpha_0^{\top}x),
\]
where $Y$ follows an exponential family distribution, $\alpha_0$ is unknown, and $g_{\phi}$ is a monotone transformation known up to a dispersion parameter $\phi$ that does not depend on the covariates. \citet{BalabdaouiDurotJankowski2019} study a semiparametric variant of this model, the monotone single index model, where the function $g_{\phi}$ is replaced by an unknown monotone function $\psi_0$ that is estimated nonparametrically, jointly with $\alpha_0$. The focus of this article is an extension of the monotone single index model introduced by \citet{HenziKlegerZiegel2023}, called the distributional single index model, which aims at estimating conditional cumulative distribution functions (CDFs) of $Y$ given $X$ rather than only its conditional expectation. The model assumes that
\begin{equation} \label{eq:mdsim}
    \mathbb{P}(Y \leq y \mid X = x) = F_0(\theta_0(x), y),
\end{equation}
where $y \mapsto F_0(z, y)$ is an unknown conditional distribution function for all fixed $z \in \mathbb{R}$, $\theta_0 \colon \mathbb{R}^d \rightarrow \mathbb{R}$ a mapping of the $d$-dimensional covariates to $\mathbb{R}$, and monotonicity of $\psi_0$ is replaced by the assumption of stochastic monotonicity. Stochastic monotonicity means that $F_0(z, y)$ is non-increasing in $z$ for all fixed $y \in \mathbb{R}$, so graphically, the conditional CDFs $F_0(z,y)$ shift to the right as $z$ increases, or in simple words, $Y$ tends to attain larger values when $\theta_0(X)$ is large. In this article, we are interested in the special case where $\theta_0(x) = \alpha_0^{\top}x$ is a linear function. The most popular families in generalized linear models --- Gaussian, Binomial, Poisson, Gamma, Inverse Gaussian --- satisfy the stochastic monotonicity assumption of the distributional single index model, save for a change of sign of $\alpha_0$ for decreasing link functions. Thus, the model can be regarded as a semiparametric, distributional extension of GLMs. If $Y$ has finite expectation, then
\[
    \mathbb{E}[Y | X = x] = \int_{0}^{\infty} \left(1-F_0(\alpha_0^{\top}x, y)\right) \, dy - \int_{-\infty}^0 F_0(\alpha_0^{\top}x, y) \, dy
\]
is increasing in $\alpha_0^{\top}x$, so the assumption of stochastic monotonicity is stronger than monotonicity of the conditional expectation in this case. When $Y$ is binary, the distributional single index model becomes a special case of the monotone single index model. Both the monotone single index model and the distributional single index model build on the idea of single index model introduced by \citet{HardleHallIchimura1993}, and we refer the interested readers to the literature reviews in \citet{BalabdaouiDurotJankowski2019} and \citet{HenziKlegerZiegel2023} for a comprehensive discussion of related work.

Rates for the estimation of the conditional CDFs in the distributional single index model have already been obtained by \citet{HenziKlegerZiegel2023}. They showed that for an independent and identically distributed (i.i.d.)\ sample $(X_1, Y_1), \dots, (X_n, Y_n)$ from model \eqref{eq:mdsim}, if $\hat{\theta}_n$ is a uniformly consistent estimator for $\theta_0$ converging at a rate of $o_p((\log(n)/n)^{1/2})$ and if $\hat{F}_n$ is computed on the data $(\hat{\theta}_n(X_1), Y_1), \dots, (\hat{\theta}_n(X_n), Y_n)$ with isotonic distributional regression \citep{MoeschingDuembgen2020, HenziZiegelGneiting2021}, then
\begin{equation} \label{eq:dim_old_rate}
    \sup_{y \in \mathbb{R}, \, x \in \mathcal{X}_{\varepsilon_n}} |\hat{F}_n(\hat{\theta}_n(x), y) - F_0(\theta_0(x), y)| = o_p((\log(n)/n)^{1/6})
\end{equation}
under certain regularity conditions. Here $\mathcal{X}_{\varepsilon} = \{x \in \mathcal{X}\colon \theta_0(x) \pm \varepsilon_n \in I\}$ for an interval $I$ on which $\theta(X)$ has density bounded away from zero and infinity, and $\varepsilon_n > 0$ is a certain sequence converging to zero. When $\hat{\theta}_n$ and $\hat{F}_n$ are computed on independent samples, a faster rate of $o_p((\log(n)/n)^{1/3})$ is achieved, if $\hat{\theta}_n$ converges to $\theta_0$ at least at this rate. \citet{HenziKlegerZiegel2023} provide no theoretical results on the estimation of the index function, and the rate of $o_p((\log(n)/n)^{1/6})$ is likely to be suboptimal, because if $\theta_0 = \hat{\theta}_n$ it should be $o_p((\log(n)/n)^{1/3})$ by Theorem 3.3 of \citet{MoeschingDuembgen2020}, or if $Y$ is binary the results of \citet{BalabdaouiDurotJankowski2019} yield $O_p(n^{-1/3})$ for the estimation of $F_0$ and $\theta_0$ when the latter is linear.

In this article, we focus on the linear case $\theta_0(x) = \alpha_0^{\top}x$, and propose to estimate $(F_0, \alpha_0)$ by minimizing weighted least squares criteria of the form
\begin{equation} \label{eq:weighted_lsc}
	L_n(Q; F, \alpha) = \frac{1}{n} \sum_{i=1}^n \int_{\mathbb{R}} (1\{Y_i \leq t\} - F(\alpha^{\top}X_i, t))^2 \, dQ(t),
\end{equation}
where $Q$ is a Borel measure. We obtain a rate of $O_P(n^{-1/3})$ when $Q$ has a finite support or it is compactly supported Lebesgue continuous with a  bounded density. Furthermore, we investigate an approach with $Q$ equal to the empirical distribution of $Y_1, \dots, Y_n$, which has favorable invariance properties under transformations of the response variable, but the consistency and convergence rates of which remain an open challenge.

The article is structured as follows. In Section \ref{sec:estimation} we describe the estimation method in detail. Convergence rates are derived in Section \ref{sec:rates}. In Section \ref{sec:transformation_invariant}, we present the invariance property result which holds when $Q$ is taken to be the empirical distribution function of the responses. In Section \ref{sec:empirical} we discuss computational aspects and present a simulation study and an application on house price data. We conclude with a discussion in Section \ref{sec:discussion}, and the proofs are deferred to Section \ref{sec:proofs}. Throughout the article, we denote the joint distribution of $(X,Y)$ by $\mathbb{P}$, the marginals by $\mathbb{P}^X$ and $\mathbb{P}^Y$, and the conditional distributions by $\mathbb{P}^{Y|X=x}$ and $\mathbb{P}^{X|Y=y}$, respectively. The empirical distributions of $n$ independent observations are denoted by $\mathbb{P}_n$, $\mathbb{P}_n^X$, $\mathbb{P}_n^Y$. We denote by $\mathrm{supp}(P)$ the support of a probability measure $P$, and by $A^{\circ}$ the interior of a set $A$. The expectation operator $\mathbb{E}[\cdot]$ is understood to be with respect to $\mathbb{P}$, unless explicitly defined differently.

\section{Estimation} \label{sec:estimation}
Let $(X_1, Y_1), \dots, (X_n, Y_n)$ be a sample of covariates and response variable from model $\eqref{eq:mdsim}$, where from now on we always assume that $\theta_0(x) = \alpha_0^{\top}x$. Define $\mathcal{C}_{\alpha} = \{\alpha^{\top}x\colon x \in \mathcal{X}\}$, and let $\mathcal{F}_{\alpha}\colon \mathcal{C}_{\alpha} \times \mathbb{R} \rightarrow [0,1]$ be the class of bivariate functions $F$ for which $y \mapsto F(z, y)$ is a CDF for all fixed $z \in \mathbb{R}$, and $z \mapsto F(z,y)$ is non-increasing for all fixed $y \in \mathbb{R}$. The function $F_0$ and the parameter $\alpha_0$ in \eqref{eq:mdsim} are not identifiable, since $\check{F}_0(z, y) = F_0(z/c, y)$ and $\check{\alpha}_0 = c\cdot \alpha_0$ for $c > 0$ yield the same conditional distributions. Hence, we assume that $\alpha_0 \in \mathcal{S}_{d-1} = \{x \in \mathbb{R}^d\colon \|x\| = 1\}$, and define the class of candidate functions for estimation by
\[
	\mathcal{F} = \{(F,\alpha)\colon \alpha \in \mathcal{S}_{d-1}, \, F \in \mathcal{F}_{\alpha}\}.
\]
To estimate $(F_0, \alpha_0)$, we propose to minimize the least squares criteria of the form given in \eqref{eq:weighted_lsc}. The following proposition describes the solutions of this minimization problem.

\begin{proposition} Assume that $Q$ is locally finite.
\begin{itemize}
	\item[(i)] For a fixed $\alpha  \in \mathcal{S}_{d-1}$, let $z_1 < \dots < z_m$ be the distinct values of $\alpha^{\top}X_1, \dots, \alpha^{\top}X_n$, with multiplicities $n_1, \dots, n_m$. The minimizer of $L_n(Q; F, \alpha)$ in $F$ is uniquely defined in the first argument on $\{z_1, \dots, z_m\}$ and in the second argument on $\mathrm{supp}(Q)$, and it is given by
	\begin{equation} \label{eq:optimal_F}
		\hat{F}_{n, \alpha}(z_i, y) = \min_{k=1,\dots, i} \max_{l=i,\dots,m} \left(\frac{1}{n_k + \dots + n_l} \sum_{j=k}^{l} \, \sum_{s: \,\alpha^{\top}X_s = z_j} 1\{Y_s \leq y\}\right), \ i = 1, \dots, m.
	\end{equation}
	\item[(ii)] Let $S^X = \{\alpha \in \mathcal{S}_{d-1}\colon \alpha^{\top}X_i \neq \alpha^{\top}X_j, \, i,j=1,\dots,n,\, i\neq j\}$. The minimum of $L_n(Q; F, \alpha)$ is achieved for a pair $( \hat{F}_{n, \hat{\alpha}_n}, \hat{\alpha}_n)$ with $\hat{\alpha}_n \in S^X$ and $ \hat{F}_{n, \hat{\alpha}_n}$ given by \eqref{eq:optimal_F}. The minimizer is not unique.
\end{itemize}
\end{proposition}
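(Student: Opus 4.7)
\textbf{For part (i),} the plan is to reduce the joint minimization over $F$ to a classical weighted antitonic regression problem. By Tonelli, $L_n(Q; F, \alpha) = \int n^{-1}\sum_{i=1}^n (1\{Y_i \leq t\} - F(\alpha^{\top} X_i, t))^2 \, dQ(t)$, and the only constraint in $\mathcal{F}_\alpha$ that couples different values of $t$ is the CDF requirement in $y$. I would first ignore this coupling and minimize pointwise in $t$: for each fixed $t$, minimizing the inner sum over antitonic sequences $F(z_1, t) \geq \cdots \geq F(z_m, t)$ with weights $n_1, \ldots, n_m$ on the grouped indicators $1\{Y_s \leq t\}$ is weighted antitonic regression, whose unique solution is exactly the min--max formula \eqref{eq:optimal_F}. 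Next I would verify that the pointwise minimizer automatically lies in $\mathcal{F}_\alpha$: the nested min, max, and weighted averages in \eqref{eq:optimal_F} preserve monotonicity, right-continuity, and the limits $0$ and $1$ in $y$ inherited from $y \mapsto 1\{Y_s \leq y\}$, so each $y \mapsto \hat F_{n,\alpha}(z_i, y)$ is a CDF and the originally ignored constraint is in fact satisfied. Uniqueness on $\{z_1, \ldots, z_m\} \times \mathrm{supp}(Q)$ follows from strict convexity of the quadratic in the relevant $F$-values combined with right-continuity in $y$.

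\textbf{For part (ii),} the central observation is that by \eqref{eq:optimal_F} the value $L_n(Q; \hat F_{n,\alpha}, \alpha)$ depends on $\alpha$ only through the induced weak ordering of $\alpha^{\top} X_1, \ldots, \alpha^{\top} X_n$. The hyperplane arrangement $\{\alpha \in \mathcal{S}_{d-1} : \alpha^{\top} (X_i - X_j) = 0\}_{i<j}$ partitions $S^X$ into finitely many open cells on which the induced permutation is constant, so $\alpha \mapsto L_n(Q; \hat F_{n,\alpha}, \alpha)$ takes at most $n!$ distinct values on $S^X$; its minimum over $S^X$ is therefore attained, and any $\alpha$ in the same optimal cell gives the same loss, yielding non-uniqueness.

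\textbf{Boundary reduction and main obstacle.} It remains to show $\inf_{\alpha \in \mathcal{S}_{d-1}} L_n(Q; \hat F_{n,\alpha}, \alpha) = \min_{\alpha \in S^X} L_n(Q; \hat F_{n,\alpha}, \alpha)$. Given any $\alpha_0 \in \mathcal{S}_{d-1} \setminus S^X$, I would pick $\alpha \in S^X$ close enough to $\alpha_0$ that every strict inequality $\alpha_0^{\top} X_i < \alpha_0^{\top} X_j$ is preserved, and transport the minimizer at $\alpha_0$ to the perturbed problem by defining $\tilde F$ on $\mathcal{C}_\alpha$ via $\tilde F(\alpha^{\top} X_i, t) = \hat F_{n, \alpha_0}(\alpha_0^{\top} X_i, t)$, extended as a right-continuous non-increasing step function in the first argument. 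Since tied indices at $\alpha_0$ receive a common value while strictly ordered indices retain their relative order at $\alpha$, the function $\tilde F$ lies in $\mathcal{F}_\alpha$; by construction $L_n(Q; \tilde F, \alpha) = L_n(Q; \hat F_{n,\alpha_0}, \alpha_0)$, and the minimizing property of $\hat F_{n,\alpha}$ from part (i) gives $L_n(Q; \hat F_{n,\alpha}, \alpha) \leq L_n(Q; \hat F_{n,\alpha_0}, \alpha_0)$. The main obstacle is precisely this last bookkeeping step --- engineering a feasible antitonic extension $\tilde F$ on $\mathcal{C}_\alpha$ that matches $\hat F_{n, \alpha_0}$ at the new evaluation points after breaking ties --- while all remaining arguments use only standard facts about weighted monotone regression and right-continuous step functions.
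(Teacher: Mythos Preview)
Your proposal is correct and follows essentially the same approach as the paper, which does not give a self-contained proof but instead cites Theorem~2.1 of \citet{HenziZiegelGneiting2021} for part~(i) and Proposition~2.2 of \citet{BalabdaouiDurotJankowski2019} for part~(ii); your sketch unpacks exactly these two arguments (pointwise antitonic regression in $t$ with automatic CDF compatibility, and the finite-ordering/perturbation argument for $\hat{\alpha}_n\in S^X$). The ``main obstacle'' you flag --- transporting $\hat F_{n,\alpha_0}$ to a feasible $\tilde F\in\mathcal{F}_\alpha$ after breaking ties --- is handled precisely as you describe and poses no real difficulty.
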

The estimator $\hat{F}_{n, \alpha}$ in \eqref{eq:optimal_F} is called the isotonic distributional regression in \citet{HenziZiegelGneiting2021}, and the fact that it is a minimizer is due to Theorem 2.1 of that article; the condition that $Q$ is locally finite is only necessary to ensure uniqueness in part (i). It follows directly from \eqref{eq:optimal_F} that $ y \mapsto \hat{F}_{n, \alpha}(\alpha^{\top}X_i, y)$ is indeed a CDF for $i = 1, \dots, m$. For a fixed $\alpha$, the estimator $\hat{F}_{n, \alpha}$ depends on $Q$ only through its support, as can be seen from \eqref{eq:optimal_F}. It suffices to compute it at the distinct values $y_1 < \dots < y_k$ of $Y_1, \dots, Y_n$, since for $y \geq y_1$,
\[
	1\{Y_i \leq y\} = 1\{Y_i \leq y_{l(y)}\}, \ i = 1, \dots, n, \quad \text{with} \quad l(y) = \max\{j \in  \{1, \dots, k\}\colon y_j \leq y\},
\]
and $1\{Y_i \leq y\} = 0$ if $y < y_1$. Part (ii) of the proposition follows by the same steps as Proposition 2.2 in \citet{BalabdaouiDurotJankowski2019}. Note that the minimizers $\hat{\alpha}_n$ and, hence, $\hat{F}_{n,\hat{\alpha}_n}$ do depend on $Q$, which appears in the criterion \eqref{eq:weighted_lsc}. To lighten the notation, we write $\hat{F}_{n,\hat{\alpha}_n} = \hat{F}_n$ in the following, and only use the subscript when it is necessary to indicate the dependence on $\hat{\alpha}_n$. To define $\hat{F}_{n}$ beyond the set $\{\hat{\alpha}_n^{\top}X_1, \dots, \hat{\alpha}_n^{\top}X_n\} \times \mathrm{supp}(Q)$, we let
\begin{equation} \label{eq:y_interpolation}
\hat{F}_{n}(z,y) = \begin{cases}
	0, & y < y_1, \\
	\hat{F}_{n}(z, y_j), & y \in [y_j, y_{j + 1}), \, j = 1, \dots, k - 1, \\
	1, & y \geq y_k,
\end{cases}
\end{equation}
for $z \in \{\hat{\alpha}_n^{\top}X_1, \dots, \hat{\alpha}_n^{\top}X_n\}$ and $y \in \mathbb{R}$, and
\[
\hat{F}_{n}(z,y) = \begin{dcases}
	\hat{F}_{n}(z_1,y), & z < z_1, \\
	\frac{z_{j+1}-z}{z_{j+1}-z_j}\hat{F}_{n}(z_j, y) + \frac{z-z_j}{z_{j+1}-z_j}\hat{F}_{n}(z_{j+1},y), & z \in [z_j, z_{j+1}), \, j = 1, \dots, m - 1, \\
	\hat{F}_{n}(z_m,y), & z \geq z_m.
\end{dcases}
\]
We apply these interpolation methods in our empirical studies in Section \ref{sec:empirical}. For the theory, any other interpolation methods satisfying the monotonicity constraints in both arguments is admissible.

In the forecasting literature, the loss function \eqref{eq:weighted_lsc} with $Q$ equal to the Lebesgue measure $\lambda$ is known under the name continuous ranked probability score (CRPS), which is a widely used proper scoring rule for the estimation of distribution functions and for forecast evaluation \citep{GneitingRaftery2007}. The criterion with general Borel measures $Q$ are the so-called threshold weighted forms of the CRPS \citep{GneitingRanjan2011}. At a first sight, the CRPS seems to be a natural choice for the loss function since it weighs all thresholds equally, but it has the drawback that $\mathbb{E}[L_n(\lambda; F_0, \alpha_0)]$ is finite only if the conditional distributions corresponding to $F_0(\alpha_0^{\top}x, \cdot)$ have finite first moment; see (21) in \citet{GneitingRaftery2007}. This is an unnecessary assumption if the goal is the estimation of the conditional CDFs, rather than conditional expectations, and it complicates proofs of consistency. We therefore focus on finite measures $Q$.

\section{Convergence rates} \label{sec:rates}

\subsection{Assumptions}
We proceed to establish consistency results for the bundled estimator $\hat{F}_n(\hat{\alpha}_n^{\top}x, y)$ and for the the separated estimators $\hat{\alpha}_n$ and $\hat{F}_n(z, y)$. The proofs and assumptions are closely related to those by \citet{BalabdaouiDurotJankowski2019} for the monotone single index model.

\begin{assumption} \label{assp:space}
	The set $\mathcal{X}$ is bounded and convex.
\end{assumption}

\begin{assumption} \label{assp:measure_Q}
The measure $Q$ and the distribution of $(X,Y)$ satisfy one of the following assumptions.
\begin{itemize}
    \item[(i)] The distribution of $X$ admits a Lebesgue density $p_X$ which is bounded from below by $\munderbar{p}_X > 0$ and from above by $\bar{p}_X < \infty$, and $Q$ has finite support, putting mass only on points $t_1 < \dots < t_p$.
    \item[(ii)] For all $y \in \mathrm{supp}(\mathbb{P}^Y)$, the distribution of $X$ conditional on $Y = y$ admits a Lebesgue density bounded from below by $\munderbar{p}_X > 0$ and from above by $\bar{p}_X < \infty$, with constants not depending on $y$. The measure $Q$ has support on $[a,b]$ and admits a Lebesgue density $q$ bounded from above by $c < \infty$.
\end{itemize}
\end{assumption}

\begin{assumption} \label{assp:differentiability}
	For all $t \in \mathrm{supp}(Q)$ the function $z \mapsto F_0(z,t)$ is continuously differentiable on $\mathcal{C}_{\alpha_0}$ with derivative $F_0^{(1)}(z, t)$, and $0 < |F_0^{(1)}(z,t)| \leq K_t$ for all $z \in \mathcal{C}^{\circ}_{\alpha_0}$ and some $K_t < \infty$.
\end{assumption}

\begin{assumption} \label{assp:distribution_alphax}
	For all $\alpha \in \mathcal{S}_{d-1}$, the random variable $\alpha^{\top}X$ admits a Lebesgue density bounded from below by $\munderbar{q} > 0$ and from above by $\bar{q} > 0$.
\end{assumption}

\begin{assumption} \label{assp:continuous_density}
    The density $p_X$ of $X$ is continuous on $\mathcal{X}$.
\end{assumption}

Assumptions \ref{assp:space}, \ref{assp:distribution_alphax} and \ref{assp:continuous_density} correspond to (A1), (A4) and (A6) in \citet{BalabdaouiDurotJankowski2019}, respectively, and Assumption \ref{assp:differentiability} is a direct extension of their condition (A5) to our case. \\

In the next sections,  we present one of the main convergence results of this work, derived under the assumptions above. The case $Q =\mathbb{P}^Y_n$ would have been a natural choice.  One referee raised the point of whether one can derive rates of convergence in this case when the distribution of $Y$ is  compactly supported.  Unfortunately, compactness of the support does not solve the issue that empirical process associated with the estimation problem at hand contains a term that cannot be handled with the classical results such as Lemma 3.4.2 or Lemma 3.4.3 of \citet{vanderVaartWellner1996}. The reason behind the additional difficulties is that this term in question is of the form
\begin{eqnarray*}
\int \psi_n(t)   (\mathbb{P}^Y_n - \mathbb{P}^Y)(t)
 \end{eqnarray*}
 where $ \psi_n$ is random function involving the empirical measure $\mathbb P_n$; additional details are provided in Section \ref{sec:transformation_invariant}.  More sophisticated tools need to be used in this context.  The problem is beyond the scope of this article but worth investigating in future research.

\subsection{Convergence rate for the bundled estimator}
The results for convergence rates for both types of $Q$ in Assumption \ref{assp:measure_Q} are presented in a unified framework. For the bundled estimator, we obtain the following result.

\begin{theorem} \label{thm:bundled}
Under Assumptions \ref{assp:space} and \ref{assp:measure_Q}, it holds that
\[
    \left(\int_{\mathbb{R}}\int_{\mathbb{R}}(\hat{F}_n(\hat{\alpha}_n^{\top}x, t) - F_0(\alpha_0^{\top}x, t))^2 \, d\mathbb{P}^X(x)dQ(t) \right)^{1/2} = \ O_p(n^{-1/3}).
\]
\end{theorem}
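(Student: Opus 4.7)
The plan is to cast the estimation as an M-estimation problem and apply a rate theorem in the spirit of Theorem 3.2.5 of \citet{vanderVaartWellner1996}. Set
$$
m_{F,\alpha}(x,y) \;=\; \int_{\mathbb{R}} (1\{y \leq t\} - F(\alpha^{\top}x, t))^2 \, dQ(t),
$$
so that the estimator minimizes $\mathbb{P}_n m_{F,\alpha}$ over $(F,\alpha) \in \mathcal{F}$. Using $\mathbb{E}[1\{Y \leq t\} \mid X] = F_0(\alpha_0^{\top}X, t)$ together with the tower property, a direct expansion of the square yields the basic identity
$$
\mathbb{P} m_{F,\alpha} - \mathbb{P} m_{F_0,\alpha_0} \;=\; \int\!\!\int \bigl(F(\alpha^{\top}x, t) - F_0(\alpha_0^{\top}x, t)\bigr)^2 \, d\mathbb{P}^X(x)\, dQ(t) \;=:\; d^2\bigl((F,\alpha),(F_0,\alpha_0)\bigr),
$$
so that $(F_0,\alpha_0)$ is the population minimizer and the excess risk equals exactly the squared target metric whose rate we want to bound.

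I would then first verify consistency $d((\hat F_n, \hat\alpha_n),(F_0,\alpha_0)) \to_p 0$ via a uniform law of large numbers over $\mathcal{F}$ (the integrands $m_{F,\alpha}$ are uniformly bounded, and $\mathcal{S}_{d-1}$ is compact, so the entropy bound discussed below more than suffices). For the rate, the pointwise bound
$$
|m_{F,\alpha}(x,y) - m_{F_0,\alpha_0}(x,y)| \;\leq\; 3 \int_{\mathbb{R}} |F(\alpha^{\top}x, t) - F_0(\alpha_0^{\top}x, t)| \, dQ(t),
$$
combined with Cauchy--Schwarz and the finiteness of $Q$, gives the Bernstein-type condition $\mathbb{P}[(m_{F,\alpha} - m_{F_0,\alpha_0})^2] \lesssim d^2((F,\alpha),(F_0,\alpha_0))$. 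The M-estimator rate lemma then identifies $r_n$ as the largest rate satisfying $r_n^2 \phi(r_n^{-1}) \leq \sqrt{n}$, where $\phi(\delta)$ upper bounds the expected modulus of the centered process $(F,\alpha) \mapsto \mathbb{G}_n(m_{F,\alpha} - m_{F_0,\alpha_0})$ on a $\delta$-neighborhood, and by a standard bracketing maximal inequality $\phi(\delta)$ can be taken to be (up to constants and a lower-order Bernstein correction) the bracketing integral $J_{[]}(\delta, \mathcal{M}_\delta, L_2(\mathbb{P}))$ of the shifted class $\mathcal{M}_\delta = \{m_{F,\alpha} - m_{F_0,\alpha_0}: d \leq \delta\}$.

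The chief obstacle is the bracketing entropy estimate
$$
\log N_{[]}\bigl(\varepsilon, \mathcal{M}_\delta, L_2(\mathbb{P})\bigr) \;\lesssim\; 1/\varepsilon,
$$
since this yields $J_{[]}(\delta) \lesssim \sqrt{\delta}$, hence $\phi(\delta) \asymp \sqrt{\delta}$, and solving $r_n^2 r_n^{-1/2} \asymp \sqrt{n}$ gives $r_n = n^{1/3}$. To prove it, fix $\alpha$ first: the elements of $\mathcal{F}_\alpha$ are $[0,1]$-valued functions that are nonincreasing in $z$ and CDFs in $t$, a bivariate monotone class whose $L^2$ bracketing entropy is classically of order $1/\varepsilon$ under Assumption~\ref{assp:measure_Q}(ii) where $Q$ has bounded density on a compact interval, and also under (i) since the support then reduces to $p$ univariate bounded monotone classes. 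To let $\alpha$ vary, cover $\mathcal{S}_{d-1}$ by $O(\eta^{-(d-1)})$ balls; within a ball around $\alpha_k$, monotonicity of $F$ in $z$ sandwiches $F(\alpha^\top x, t)$ between $F(\alpha_k^\top x \pm \eta\sup_{\mathcal{X}}\|x\|, t)$, and the bounded density of $\alpha_k^\top X$ (Assumption~\ref{assp:distribution_alphax}) together with $F(\cdot, t) \in [0,1]$ bounds the $L^2$ width of this sandwich by $\sqrt{\eta}$ uniformly in $F$. Choosing $\eta \asymp \varepsilon^2$ absorbs the sphere cover into an additive $\log(1/\varepsilon)$ term that is dominated by the monotone $1/\varepsilon$ bound, delivering the desired entropy estimate and thereby the $n^{-1/3}$ rate.
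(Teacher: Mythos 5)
Your overall architecture (M-estimation, the exact identity $\mathbb{P}m_{F,\alpha}-\mathbb{P}m_{F_0,\alpha_0}=d^2$, a bracketing maximal inequality, and solving $r_n^2\phi(1/r_n)\leq\sqrt{n}$ with $\phi(\delta)\asymp\sqrt{\delta}$) matches the paper's proof, and your treatment of the variation in $\alpha$ via a sphere cover with $\eta\asymp\varepsilon^2$ and the monotone sandwich is essentially Proposition \ref{prop:entropy}. Your argument for Assumption \ref{assp:measure_Q}(i) is also sound: with finite support the problem reduces to $p$ univariate monotone classes, which is exactly Lemma \ref{lem:entropy_finite_supp}.

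However, for Assumption \ref{assp:measure_Q}(ii) your proposal rests on a false premise: you assert that the class of $[0,1]$-valued functions that are monotone in $z$ and in $t$ has $L_2$ bracketing entropy ``classically of order $1/\varepsilon$.'' That is the univariate rate; for bivariate monotone functions the entropy is of order $1/\varepsilon^2$ (Theorem 1.1 of Gao and Wellner, 2007), which makes the bracketing integral diverge and destroys the $n^{-1/3}$ rate --- this is precisely the trap flagged in Remark 1 of the paper. The way out is not to bracket $(x,t)\mapsto F(\alpha^{\top}x,t)$ itself but to first expand the square in $m_{F,\alpha}$: the only genuinely bivariate contribution is $h(x,y)=\int_{[y,\infty)}F(\alpha^{\top}x,t)\,dQ(t)$, i.e.\ the class $\mathcal{M}$ in \eqref{eq:difficult_entropy_clas}. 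Because $Q$ has a density bounded by $c$ on a compact $[a,b]$, discretizing $[a,b]$ with mesh $\varepsilon$ makes the increments $h_{j+1}-h_j$ uniformly of size $O(c\varepsilon)$, while each $h_j$ lives in the univariate-type class $\mathcal{G}$; this smoothing in $y$ is what brings the entropy of $\mathcal{M}$ down to $O(1/\varepsilon)$ (Lemma \ref{lem:entropy_continuous_q}). Without this step --- or some substitute exploiting the integration against $Q$ --- your entropy bound for case (ii) does not hold, and the rate argument fails there.
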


The proof of Theorem \ref{thm:bundled} applies Theorem 3.4.1 and Lemma 3.4.2 of \citet{vanderVaartWellner1996}, and it is given in Section \ref{sec:proof_bundled}. In the following, we introduce empirical process notation, provide auxiliary results that are of independent interest, and discuss the techniques and problems involved in the proof.

In accordance with Assumption \ref{assp:space}, assume $\|x\| \leq R$ for all $x \in \mathcal{X}$ and some $R > 0$, so that $|\alpha^{\top}x| \leq R$ for $\alpha \in \mathcal{S}^{d-1}$. In the proofs, the following function classes appear,
\begin{align*}
	\mathcal{H} & = \{h\colon [-2R, 2R] \rightarrow [0,1], \, \text{non-increasing}\}, \\
\mathcal{G} & = \{g\colon \mathcal{X} \rightarrow [0,1], \, g(x) = h(\alpha^{\top}x), \, (\alpha, h) \in \mathcal{S}_{d-1} \times \mathcal{H}\},
\end{align*}
where the support in the class $\mathcal{H}$ has to be extended to $[-2R, 2R]$ for technical reasons. Non-increasing functions $\tilde{h}\colon [-R, R] \rightarrow [0,1]$ are considered as elements of $\mathcal{H}$ by constant extrapolation at the boundaries. Denote the $L_2$-norm of functions from $\mathcal{X}$ to $\mathbb{R}$, with respect to a Borel measure $\mu$, by
\[
	\|f\|_{\mu} = \left(\int_{\mathcal{X}}f(x)^2\,\,d\mu(x)\right)^{1/2}.
\]
For integration with respect to the Lebesgue measure over a set $A$, we write $\|f\|_{A}$. The bracketing entropy of a function class $\mathcal{T}$ with respect to some  norm $\|\cdot\|$ is denoted by $N_B(\varepsilon, \mathcal{T}, \|\cdot\|)$, and the bracketing integral is defined as
\[
    \tilde{J}(\delta, \mathcal{T}, \|\cdot\|) = \int_0^{\delta}\sqrt{1+\log N_B(\varepsilon, \mathcal{T}, \|\cdot\|) \, d\varepsilon}.
\]
The following proposition, which relies on Theorem 2.7.5 of \citet{vanderVaartWellner1996} and a result of \citet{FeigeSchechtman2002}, is crucial for all our results. 

\begin{proposition} \label{prop:entropy}
Let $\mu$ be a Lebesgue continuous distribution with support in a bounded set contained in a ball of radius $R > 0$ with density bounded from above by $D > 0$. Then,
\[
	\log(N_B(\varepsilon, \mathcal{G}, \|\cdot\|_{\mathcal{X}})) \leq \frac{2^{(d+1)/2}d^{1/4}R^{(d-1)/2}(1+\sqrt{R})(d\sqrt{A}+K\sqrt{R})\sqrt{D}}{\varepsilon}
\]
for universal constants $A, K > 0$.
\end{proposition}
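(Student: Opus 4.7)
My plan is to bracket $\mathcal{G}$ by combining an efficient covering of the unit sphere $\mathcal{S}_{d-1}$ (parameterizing $\alpha$), obtained from the Feige--Schechtman sphere covering bound, with the classical bracketing entropy for bounded univariate monotone functions, namely Theorem 2.7.5 of \citet{vanderVaartWellner1996}, applied to the pushforward of $\mu$ along each candidate direction. I first fix a $\delta$-net $\{\alpha_1,\dots,\alpha_N\}$ of $\mathcal{S}_{d-1}$ whose cardinality is controlled with the specific constants $A$ and $d^{1/4}$ that appear in the stated bound, and then, for each $\alpha_k$, produce brackets via the pushforward measure $\mu_{\alpha_k}$ of $\mu$ under $x\mapsto \alpha_k^\top x$.

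Since $\mu_{\alpha_k}$ is a probability measure on $[-R,R]$, Theorem 2.7.5 yields a family of brackets for $\mathcal{H}$ in $L_2(\mu_{\alpha_k})$ of width $\varepsilon/2$ and log-cardinality at most $K/\varepsilon$ for a universal constant $K$. Each such bracket $[l,u] \subset \mathcal{H}$ lifts by composition with $\alpha_k^\top$ to a bracket for the corresponding slice of $\mathcal{G}$, and its $L_2(\mu)$-width equals the $L_2(\mu_{\alpha_k})$-width of $[l,u]$ by a change of variables. To cover a general $(h,\alpha)$ with $\|\alpha-\alpha_k\|\leq \delta$, I exploit monotonicity: since $|\alpha^\top x - \alpha_k^\top x|\leq R\delta$ on $\mathcal{X}$, one has $h(\alpha_k^\top x + R\delta) \leq h(\alpha^\top x) \leq h(\alpha_k^\top x - R\delta)$. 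The shifted functions $z\mapsto h(z\pm R\delta)$ again lie in $\mathcal{H}$ after constant extrapolation (this is precisely why the class $\mathcal{H}$ is defined on the enlarged interval $[-2R,2R]$), and combining the upper bracket of $h(\cdot - R\delta)$ with the lower bracket of $h(\cdot + R\delta)$ yields a bracket for $g(x)=h(\alpha^\top x)$ whose $L_2(\mu)$-width is at most $\varepsilon + \|h(\cdot-R\delta)-h(\cdot+R\delta)\|_{L_2(\mu_{\alpha_k})}$.

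The remaining discrepancy is controlled by a cross-sectional density estimate: because $\mu$ has density at most $D$ supported in a ball of radius $R$, the density of $\mu_{\alpha_k}$ on $[-R,R]$ is bounded by a constant multiple of $D R^{d-1}$ (the bound on the $(d-1)$-volume of a hyperplane slice), and since $h(\cdot - R\delta) - h(\cdot + R\delta)$ is non-negative and bounded by $1$, a change of variables gives
\[
    \|h(\cdot - R\delta)-h(\cdot + R\delta)\|_{L_2(\mu_{\alpha_k})}^2 \leq \int \bigl(h(z-R\delta)-h(z+R\delta)\bigr)\, d\mu_{\alpha_k}(z) \leq C D R^d \delta
\]
for an absolute constant $C$. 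Choosing $\delta \asymp \varepsilon^2/(D R^d)$ balances this expansion against the original bracket width, so that the total log-bracketing entropy becomes $\log N + K/\varepsilon$, with the sphere covering contribution absorbed into the $O(1/\varepsilon)$ term. Tracking the constants from Feige--Schechtman (yielding the $d^{1/4}$ and $(1+\sqrt{R})$ factors), from the $(d-1)$-dimensional cross-sectional volume (yielding $R^{(d-1)/2}$ via its square root), and from the density bound $\sqrt{D}$ then recovers the stated coefficient. The main obstacle is the bookkeeping of these dimensional constants through the Feige--Schechtman covering and the pushforward density estimate; the structural argument is a clean product of a spherical $\delta$-net with the monotone bracketing, but extracting the precise combination of $d$, $R$, and $D$ requires care in the balancing step and in the way the two levels of brackets are composed.
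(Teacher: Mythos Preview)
Your strategy---combine a net on $\mathcal{S}_{d-1}$ with $L_2$-brackets for bounded monotone functions, use monotonicity of $h$ to absorb the direction mismatch into a shift, and control that shift by the telescoping integral trick---is exactly the paper's argument. The paper fixes the sphere net at radius $\varepsilon^2$, brackets $\mathcal{H}$ once in the \emph{Lebesgue} norm $\|\cdot\|_{[-2R,2R]}$, shifts the single bracket for $h$ itself (so one bracket index suffices rather than two), and only at the very end passes from $\|\cdot\|_{\mathcal{X}}$ to $\|\cdot\|_\mu$ via the density bound $D$. Your choice to work directly with the pushforward $\mu_{\alpha_k}$ is a legitimate variant and only costs you direction-dependent brackets, which does not affect the log-cardinality.

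Where your write-up goes wrong is the attribution of constants in the final paragraph. The factor $d^{1/4}$ does \emph{not} come from Feige--Schechtman. In the paper it arises because the $d$-dimensional Lebesgue integral over $\mathcal{X}$ is reduced to a one-dimensional one by the linear change of variables $t_1=\alpha_i^\top x$, $t_j=x_j$ for $j\geq 2$, whose Jacobian is $1/|\alpha_i^{(1)}|\leq \sqrt{d}$ (some coordinate of a unit vector has absolute value at least $1/\sqrt{d}$); the square root of $\sqrt{d}$ is $d^{1/4}$. Likewise $(1+\sqrt{R})$ comes from adding the $\varepsilon$ contributed by the monotone bracket to the $\sqrt{2R}\,\varepsilon$ contributed by the shift bound $\int (h(z)-h(z+2\varepsilon^2 R))\,dz\leq 2\varepsilon^2 R$, not from the sphere covering. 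Feige--Schechtman contributes only the additive term $d\sqrt{A}$ via $\log N\leq d\log(A/\varepsilon^2)\leq 2d\sqrt{A}/\varepsilon$. Your pushforward-density route would instead produce the $(d-1)$-volume of a ball section rather than $d^{1/4}$, so while it yields \emph{a} bound of order $1/\varepsilon$, it does not literally recover the stated coefficient; to match the paper's constant you need the explicit change of variables on Lebesgue measure.
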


Due to Proposition \ref{prop:entropy}, the entropy of the class of functions $x \mapsto F(\alpha^{\top}x, y)$ for $(F,\alpha) \in \mathcal{F}$ and $y$ fixed is of the same order as the entropy of the monotone function class with values in $[0,1]$. If $Q$ has finite support, this is sufficient to obtain the cubic convergence rate. However, as one would expect, the constants in the bounds increase with the size of the support, and it is not possible to extend the same proof strategy to Lebesgue continuous $Q$. For this case, a bound for the entropy of the class
\begin{equation} \label{eq:difficult_entropy_clas}
    \mathcal{M} := \left\{h\colon \mathbb{R}^d\times \mathbb{R} \rightarrow [0,1], \, h(x,y) = \int_{[y,\infty)}F(\alpha^{\top}x, t)^2 dQ(t), \, (F,\alpha) \in \mathcal{F} \right\}
\end{equation}
is required. We find such a bound by constructing a suitable discretization of the support of $Q$.

\begin{remark}
One might think that a simpler way to bound the entropy of the class $\mathcal{M}$ would be via the results of \citet{GaoWellner2007} on the entropy of multivariate monotone function. Indeed, the function $(z,y) \mapsto F(z,y)$ is bivariate monotone, and due to Proposition \ref{prop:entropy}, the fact that we have $\alpha^{\top}x$ in the first argument only increases the entropy by a constant factor. However, according to Theorem 1.1 of \citet{GaoWellner2007}, the entropy of the class of bivariate monotone functions is of order $1/\varepsilon^2$, which leads to a diverging entropy integral. Even with the relaxation discussed on p.~326 of \citet{vanderVaartWellner1996}, which allows to integrate only from $\min(u\delta^2, \delta)/3$ for small $u > 0$ in the entropy integral, it is not possible to achieve the cubic rate with this entropy bound.
\end{remark}

\subsection{Convergence rate for the separated estimators}
The rate for the separated estimators $\hat{F}_n(z,y)$ and $\hat{\alpha}_n$ relies on Theorem \ref{thm:bundled} and is proved in a similar way as in Theorem 5.2 and Corollary 5.3 of \citet{BalabdaouiDurotJankowski2019}. Note that under our model assumptions, the parameters $F$ and $\alpha$ are indeed identifiable. More precisely, if $F(\alpha^{\top}X,t) = F_0(\alpha_0^{\top}X,t)$ almost surely for a fixed $t$, then $F(z,t) = F_0(z,t)$  for $(z,t) \in \mathcal{C}_{\alpha_0} \times \mathrm{supp}(Q)$, and $\alpha = \alpha_0$. This is shown in an analogous way as in Proposition 5.1 of \citet{BalabdaouiDurotJankowski2019}, and it is proven in Section \ref{sec:identifiability} for completeness.

\begin{theorem}
\label{thm:separated_consistency}
    Let Assumptions \ref{assp:space}, \ref{assp:measure_Q} and \ref{assp:distribution_alphax} hold true. Assume that for each $t$ the function $F_0(\cdot, t)$ is left-continuous, non constant and does not have discontinuity points on the boundary of $\mathcal{C}_{\alpha_0}$. Furthermore, assume that from each subsequence $(n_k)_{k\in\mathbb{N}}$ we can extract another subsequence $(n_{k_l})_{l\in\mathbb{N}}$ which satisfies
    \begin{equation}
    \label{eq:asconsistencyhard}
	\lim_{l \to \infty}  \int_\mathbb{R} \int_\mathcal{X}(F_0(\alpha_0^{\top}x,t) - \hat{F}_{n_{k_l}}(\hat{\alpha}_{k_l}^{\top}x,t) )^2d\mathbb{P}^X(x) dQ(t) = 0
    \end{equation}
    almost surely. Then,
    \begin{itemize}
	\item[(i)] $\hat{\alpha}_n$ converges to $\alpha_0$ in              probability in the euclidean norm,
	\item[(ii)] for all continuity points $(z,t)$ of $F_0$ in $\mathcal{C}^{\circ}_{\alpha_0} \times \mathrm{supp}(Q)$, we have that           $\hat{F}_{n}(z,t)$ converges to              $F_0(z,t)$ in probability.
    \end{itemize}
\end{theorem}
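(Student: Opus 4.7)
The plan is to use the standard subsequence principle for convergence in probability: it suffices to show that from any subsequence of indices, one can extract a further subsequence along which $\hat{\alpha}_n \to \alpha_0$ and $\hat{F}_n(z,t) \to F_0(z,t)$ almost surely (at continuity points). Starting from any subsequence $(n_k)$, the hypothesis \eqref{eq:asconsistencyhard} already yields a sub-subsequence $(n_{k_l})$ along which the bundled $L_2$ error vanishes almost surely. Fixing $\omega$ in this full-probability event, all subsequent extractions will be carried out pointwise in $\omega$.

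On this event, compactness of $\mathcal{S}_{d-1}$ allows one to extract a further subsequence (not relabelled) along which $\hat{\alpha}_{n_{k_l}} \to \alpha^*$ for some $\alpha^* \in \mathcal{S}_{d-1}$. For each fixed $t$, the maps $z \mapsto \hat{F}_{n_{k_l}}(z,t)$ are non-increasing and $[0,1]$-valued on $[-R,R]$, so Helly's selection theorem, applied along a countable dense set of $t$'s in $\mathrm{supp}(Q)$ and combined with a diagonal argument, produces a further subsequence and a limiting function $F^*$ satisfying the shape constraints of $\mathcal{F}_{\alpha^*}$, such that $\hat{F}_{n_{k_l}}(z,t) \to F^*(z,t)$ at every continuity point of $F^*(\cdot,t)$.

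The key step is then to pass to the limit in
\[
    \int_{\mathbb{R}}\int_{\mathcal{X}} (\hat{F}_{n_{k_l}}(\hat{\alpha}_{n_{k_l}}^{\top}x, t) - F_0(\alpha_0^{\top}x, t))^2 \, d\mathbb{P}^X(x)\, dQ(t) \to 0.
\]
For $Q$-almost every $t$, $F^*(\cdot,t)$ has at most countably many discontinuities, and since $\alpha^{*\top}X$ admits a Lebesgue density (Assumption \ref{assp:distribution_alphax}), the set of $x$ at which $\hat{F}_{n_{k_l}}(\hat{\alpha}_{n_{k_l}}^{\top}x, t)$ fails to converge to $F^*(\alpha^{*\top}x, t)$ is $\mathbb{P}^X$-negligible. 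Bounded convergence then gives
\[
    \int_{\mathbb{R}}\int_{\mathcal{X}} (F^*(\alpha^{*\top}x, t) - F_0(\alpha_0^{\top}x, t))^2 \, d\mathbb{P}^X(x)\, dQ(t) = 0,
\]
and the identifiability result discussed in Section \ref{sec:identifiability} forces $\alpha^* = \alpha_0$ and $F^*(z,t) = F_0(z,t)$ on $\mathcal{C}_{\alpha_0} \times \mathrm{supp}(Q)$. This uniqueness of any subsequential limit proves (i), and at any continuity point $(z,t)$ of $F_0$ in $\mathcal{C}_{\alpha_0}^{\circ} \times \mathrm{supp}(Q)$ it proves (ii).

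The main technical obstacle is reconciling the Helly-type convergence of $\hat{F}_{n_{k_l}}(\cdot,t)$, which a priori holds only at continuity points of the subsequential limit $F^*$, with the evaluation at the moving argument $\hat{\alpha}_{n_{k_l}}^{\top}x$. The hypotheses of left-continuity and absence of boundary discontinuities of $F_0(\cdot,t)$, together with the density of $\alpha^{\top}X$, are precisely what is needed to confine the exceptional sets to $\mathbb{P}^X \otimes Q$-null sets and to let identifiability be applied on the interior $\mathcal{C}_{\alpha_0}^{\circ}$; verifying these measure-theoretic details carefully is where most of the work will lie.
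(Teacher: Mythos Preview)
Your proposal is correct and follows essentially the same approach as the paper: subsequence extraction, compactness of $\mathcal{S}_{d-1}$, a Helly-type selection for $\hat{F}_n$, dominated convergence to pass to the limit in the $L_2$ integral, and the identifiability result of Section~\ref{sec:identifiability}. The only differences are organizational: the paper handles the limit via an explicit three-term decomposition (separating the drift $\hat{\alpha}_n^{\top}x \to \beta_0^{\top}x$ from the Helly convergence $\hat{F}_n \to G$) and invokes a bivariate Helly argument together with \citet{Lavric1993} on the null discontinuity set of a bivariate monotone function, whereas you collapse these into a single step and use univariate Helly plus a diagonal argument over thresholds---which works but requires a little extra care to extend the convergence from the countable dense set of $t$'s to $Q$-almost every $t$.
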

\begin{remark}
    The condition \eqref{eq:asconsistencyhard} in Theorem \ref{thm:separated_consistency} holds under our assumptions due to Theorem \ref{thm:bundled}.
\end{remark}

\begin{theorem}
\label{thm:separated_rate}
    Define $\underline{c} = \inf \mathcal{C}_{\alpha_0}$ and $\overline{c} = \sup \mathcal{C}_{\alpha_0}$. Under Assumptions \ref{assp:space}-\ref{assp:continuous_density}, we have that
\begin{itemize}
	\item[(i)] $\lVert \alpha_0 - \hat{\alpha}_n \lVert = O_P(n^{-1/3})$;
	\item[(ii)] if $\sup_{t\in\mathrm{supp}(Q)} K_t < \infty$, then
		\begin{equation}
    \label{eq:separatesampleslitting}
			\left(\int_{\R} \int_{\underline{c} + v_n}^{\overline{c} - v_n} \left(F_0(z,t) - \hat{F}_{n}(z,t)\right)^2 dz\ dQ(t) \right)^{1/2} = O_P(n^{-1/3})
		\end{equation}
		for all sequences $v_n$ such that $\underline{c} + v_n\leq\overline{c} - v_n$ and $n^{1/3} v_n \to \infty$ for $n \to \infty$. 
\end{itemize}
\end{theorem}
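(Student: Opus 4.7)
The plan is to deduce both (i) and (ii) from the bundled rate of Theorem \ref{thm:bundled}, following the strategy of Theorem 5.2 and Corollary 5.3 in \citet{BalabdaouiDurotJankowski2019}, adapted to handle the extra integration over $Q$. Throughout, I will work with the decomposition
\begin{equation*}
    \hat{F}_n(\hat{\alpha}_n^{\top}x, t) - F_0(\alpha_0^{\top}x, t) = U_n(x,t) + V_n(x,t),
\end{equation*}
where $U_n(x,t) = \hat{F}_n(\hat{\alpha}_n^{\top}x, t) - F_0(\hat{\alpha}_n^{\top}x, t)$ and $V_n(x,t) = F_0(\hat{\alpha}_n^{\top}x, t) - F_0(\alpha_0^{\top}x, t)$, together with the mean value expansion $V_n(x,t) = -F_0^{(1)}(\tilde{z}_n(x,t), t)(\hat{\alpha}_n - \alpha_0)^{\top}x$ obtained from Assumption \ref{assp:differentiability}.

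For (i), Theorem \ref{thm:bundled} gives $\|U_n + V_n\|_{L^2(\mathbb{P}^X \otimes Q)} = O_P(n^{-1/3})$, and the crux is to show that the cross term $\langle U_n, V_n\rangle_{L^2(\mathbb{P}^X \otimes Q)}$ is of smaller order than $\|V_n\|_{L^2(\mathbb{P}^X \otimes Q)}^2$, so that $\|V_n\|_{L^2(\mathbb{P}^X \otimes Q)} = O_P(n^{-1/3})$ as well. Heuristically, by Proposition 1(i) the function $U_n(\cdot,t)$ is the empirical projection residual of $1\{Y \leq t\} - F_0(\hat{\alpha}_n^{\top}X, t)$ onto monotone functions of $\hat{\alpha}_n^{\top}x$, so $\langle U_n, g \rangle \approx 0$ for any $g$ that depends on $x$ only through $\hat{\alpha}_n^{\top}x$; since $V_n$ is to leading order linear in $x$, its projection onto that subspace is one-dimensional and can be absorbed. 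Once this is established, I combine it with the lower bound
\begin{equation*}
    \|V_n\|_{L^2(\mathbb{P}^X \otimes Q)}^2 \geq c_0\,(\hat{\alpha}_n - \alpha_0)^{\top}\Sigma(\hat{\alpha}_n - \alpha_0), \qquad \Sigma = \int\!\!\int F_0^{(1)}(\alpha_0^{\top}x,t)^2\, xx^{\top}\, d\mathbb{P}^X(x)\, dQ(t),
\end{equation*}
and verify that $\Sigma$ is positive definite on the tangent space of $\mathcal{S}_{d-1}$ at $\alpha_0$ via Assumptions \ref{assp:differentiability}, \ref{assp:distribution_alphax} and \ref{assp:continuous_density}. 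The unit-norm constraint is accommodated by splitting $\hat{\alpha}_n - \alpha_0$ into its tangential and normal components at $\alpha_0$, the latter being of order $\|\hat{\alpha}_n - \alpha_0\|^2$ and thus higher order; this yields $\|\hat{\alpha}_n - \alpha_0\| = O_P(n^{-1/3})$.

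For (ii), once (i) is available, the hypothesis $\sup_{t\in\mathrm{supp}(Q)} K_t < \infty$ together with the Taylor expansion of $V_n$ and the boundedness of $\mathcal{X}$ implies $\|V_n\|_{L^2(\mathbb{P}^X \otimes Q)} = O_P(n^{-1/3})$, and hence $\|U_n\|_{L^2(\mathbb{P}^X \otimes Q)} = O_P(n^{-1/3})$ by the triangle inequality. A change of variables from $x$ to $z = \hat{\alpha}_n^{\top}x$, combined with the density lower bound $\munderbar{q}$ from Assumption \ref{assp:distribution_alphax}, transfers this to
\begin{equation*}
    \int\!\int_{\mathcal{C}_{\hat{\alpha}_n}}(\hat{F}_n(z,t) - F_0(z,t))^2\, dz\, dQ(t) = O_P(n^{-2/3}).
\end{equation*}
Since the endpoints of $\mathcal{C}_{\hat{\alpha}_n}$ differ from those of $\mathcal{C}_{\alpha_0}$ by $O_P(\|\hat{\alpha}_n - \alpha_0\|) = O_P(n^{-1/3})$ and $n^{1/3}v_n \to \infty$, the interval $[\underline{c}+v_n, \overline{c}-v_n]$ is contained in $\mathcal{C}_{\hat{\alpha}_n}$ with probability tending to 1, giving \eqref{eq:separatesampleslitting}.

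The main obstacle I foresee lies in step (i), namely the rigorous control of the cross term $\langle U_n, V_n\rangle$. The projection property of $\hat{F}_n(\cdot,t)$ is stated with respect to the empirical measure $\mathbb{P}_n^X$ and for fixed $t$, whereas the decomposition argument requires orthogonality in $L^2(\mathbb{P}^X \otimes Q)$; bridging this gap uniformly in $t \in \mathrm{supp}(Q)$ calls for an empirical process bound applied to an enlarged function class consisting of products of elements of $\mathcal{G}$ with linear functionals $x \mapsto v^{\top}x$ for $v$ in the tangent space at $\alpha_0$, whose bracketing entropy can be controlled by combining Proposition \ref{prop:entropy} with elementary arguments for the finite-dimensional linear part.
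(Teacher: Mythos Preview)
Your decomposition $U_n+V_n$ coincides with the paper's $G_1+G_2$, and your argument for (ii) is essentially the one in the paper (triangle inequality, Lipschitz bound via $\sup_t K_t$, change of variables with the density lower bound $\munderbar q$, then the interval inclusion $[\underline c+v_n,\overline c-v_n]\subset\mathcal C_{\hat\alpha_n}$ with probability tending to one).

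The gap is in step (i), and it is not the empirical-versus-population issue you flag at the end; the heuristic itself is incorrect. You write that $U_n(\cdot,t)$ is a projection residual and hence $\langle U_n,g\rangle\approx 0$ for any $g$ depending on $x$ only through $\hat\alpha_n^\top x$. But $U_n(x,t)=\hat F_n(\hat\alpha_n^\top x,t)-F_0(\hat\alpha_n^\top x,t)$ is itself such a function of $\hat\alpha_n^\top x$, so it cannot be orthogonal to that class (take $g=U_n$). The isotonic residual is $1\{Y\le t\}-\hat F_n(\hat\alpha_n^\top X,t)$, not $U_n$; and even that residual satisfies orthogonality only against the tangent cone at $\hat F_n$ under $\mathbb P_n$, not against arbitrary functions of $\hat\alpha_n^\top x$. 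So the mechanism you propose for making the cross term small does not exist, and no empirical process repair can rescue it.

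The paper's route is different and avoids projection properties entirely. It exploits exactly the fact that $U_n$ \emph{is} a function of $\hat\alpha_n^\top x$: by the tower property and Cauchy--Schwarz,
\[
\mathbb E[U_n V_n]^2=\mathbb E\bigl[U_n\,\mathbb E[V_n\mid \hat\alpha_n^\top X,T]\bigr]^2\le \mathbb E[U_n^2]\,\mathbb E\bigl[\mathbb E[V_n\mid \hat\alpha_n^\top X,T]^2\bigr]= c_n\,\mathbb E[U_n^2]\,\mathbb E[V_n^2],
\]
where $c_n=\mathbb E[\mathbb E[V_n\mid\hat\alpha_n^\top X,T]^2]/\mathbb E[V_n^2]$. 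One then shows $\limsup_n c_n\le c<1$ via a Taylor expansion of $V_n$ and the observation that $\gamma^\top X$ is not $\sigma(\alpha_0^\top X)$-measurable for $\gamma\perp\alpha_0$ (this is the Murphy--van der Vaart--Wellner Lemma 2.5 argument). With $c_n<1$ one gets $\|U_n+V_n\|^2\ge(1-\sqrt{c_n})(\|U_n\|^2+\|V_n\|^2)$, hence $\|V_n\|=O_P(n^{-1/3})$, and the lower bound on $\|V_n\|^2$ in terms of $\|\hat\alpha_n-\alpha_0\|^2$ that you outline then finishes (i). So the object you should be bounding is the ratio of the conditional to the unconditional second moment of $V_n$, not an inner product of $U_n$ against a monotone function.
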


Part (ii) of Theorem \ref{thm:separated_rate} can be regarded as analogous to the result \eqref{eq:dim_old_rate} derived by \citet[Theorem 5.1]{HenziKlegerZiegel2023}, with the weighted $L_2$-norm replacing the supremum norm. \citet{HenziKlegerZiegel2023} do not assume a linear index function, but they impose the assumption that the index function is estimated the rate of $(\log(n)/n)^{1/2}$, rather than deriving a convergence rate, which we do in part (i) of the above theorem.

\section{Empirical distribution as weighting measure} \label{sec:transformation_invariant}
The methods proposed so far require the specification of a weighting measure $Q$. An interesting variant of the criterion \eqref{eq:weighted_lsc}, which does not require an explicit weighting choice, arises when $Q$ equals the empirical distribution $\mathbb{P}_n^Y$; that is,
\begin{equation*}
	L_n(\mathbb{P}_n^Y; F, \alpha) = \frac{1}{n} \sum_{i=1}^n \int_{\mathbb{R}} (1\{Y_i \leq t\} - F(\alpha^{\top}X_i, t))^2 \, d\mathbb{P}_n^Y(t) = \frac{1}{n^2} \sum_{i,j=1}^n (1\{Y_i \leq Y_j\} - F(\alpha^{\top}X_i, Y_j))^2.
\end{equation*}
According to the follwing lemma, for this choice of $Q$ the estimator $\hat{\alpha}_n$ and the pointwise error of the CDFs at the observed values of the response variable do not depend on the scale of the observations $Y$.
\begin{lemma} \label{lem:Invariance}
Let $f\colon \mathbb{R} \rightarrow \mathbb{R}$ be strictly increasing on the support of $Y$, and $f^{-1}(t) = \inf\{s \in \mathbb{R}\colon f(s) \geq t\}$. Then, the following hold with probability one.
\begin{itemize}
	\item[(i)] A tuple $(\hat{F}_{n, \hat{\alpha}_n}, \hat{\alpha}_n)$ minimizes $L_n(\mathbb{P}_n^Y; \cdot)$ if and only if $(\tilde{F}_{n, \hat{\alpha}_n}, \hat{\alpha}_n)$ with $\tilde{F}_{n, \hat{\alpha}_n}(t, z) = \hat{F}_{n, \hat{\alpha}_n}(z, f^{-1}(t))$ is a minimizer of $L_n(\mathbb{P}_n^{f(Y)}; \cdot)$, and it holds that $L_n(\mathbb{P}_n^{Y}; \hat{F}_{n, \hat{\alpha}_n}, \hat{\alpha}_n) = L_n(\mathbb{P}_n^{f(Y)}; \tilde{F}_{n, \hat{\alpha}_n}, \hat{\alpha}_n)$.
	\item[(ii)] With $\tilde{F}_0(z,t) = F_0(z, f^{-1}(t))$ and $t_i = f(Y_i)$, we have
	\[
		\tilde{F}_{n,\hat{\alpha}_n}(\hat{\alpha}_n^{\top}X_i, t_i) - \tilde{F}_0(\alpha^{\top}X_i, t_i) = \hat{F}_{n,\hat{\alpha}_n}(\hat{\alpha}_n^{\top}X_i, Y_i) - F_0(\alpha^{\top}X_i, Y_i), \ i = 1, \dots, n.
	\]
	If $\tilde{F}_{n,\hat{\alpha}_n}$ and $\hat{F}_{n,\hat{\alpha}_n}$ are interpolated as in \eqref{eq:y_interpolation}, then the above equality holds for all $y \in \mathbb{R}$ and $t = f(y)$.
\end{itemize}
\end{lemma}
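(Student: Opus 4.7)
The plan is to exploit the fact that when $Q = \mathbb{P}_n^Y$, the criterion $L_n(\mathbb{P}_n^Y; F, \alpha)$ depends on $Y_1, \ldots, Y_n$ only through the pairwise comparisons $\mathbf{1}\{Y_i \leq Y_j\}$ and the evaluations $F(\alpha^{\top}X_i, Y_j)$. Since $f$ is strictly increasing on $\mathrm{supp}(\mathbb{P}^Y)$ and each $Y_i$ lies in $\mathrm{supp}(\mathbb{P}^Y)$ with probability one, the order among the $Y_i$ is preserved by $f$, so that $\mathbf{1}\{Y_i \leq Y_j\} = \mathbf{1}\{f(Y_i) \leq f(Y_j)\}$ a.s.\ for all $i,j$, and $f^{-1}(f(Y_i)) = Y_i$ a.s., where $f^{-1}$ denotes the generalized inverse.

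For part (i), I would start from a tuple $(F, \alpha) \in \mathcal{F}$ for the criterion $L_n(\mathbb{P}_n^Y; \cdot)$, define $\tilde{F}(z,t) = F(z, f^{-1}(t))$, and check that $(\tilde{F}, \alpha)$ is admissible for $L_n(\mathbb{P}_n^{f(Y)}; \cdot)$: the map $z \mapsto \tilde{F}(z,t)$ inherits monotonicity from $F$, while $t \mapsto \tilde{F}(z,t)$ is a CDF because it is the composition of a CDF with the non-decreasing, left-continuous function $f^{-1}$. The invariance of the loss is then a one-line computation,
\begin{align*}
L_n(\mathbb{P}_n^{f(Y)}; \tilde{F}, \alpha) &= \frac{1}{n^2} \sum_{i,j=1}^n (\mathbf{1}\{f(Y_i) \leq f(Y_j)\} - \tilde{F}(\alpha^{\top}X_i, f(Y_j)))^2 \\
&= \frac{1}{n^2} \sum_{i,j=1}^n (\mathbf{1}\{Y_i \leq Y_j\} - F(\alpha^{\top}X_i, Y_j))^2 = L_n(\mathbb{P}_n^{Y}; F, \alpha),
\end{align*}
using $\tilde{F}(z, f(Y_j)) = F(z, f^{-1}(f(Y_j))) = F(z, Y_j)$ a.s. The transformation $F \mapsto \tilde{F}$ has the inverse $\tilde{F} \mapsto \tilde{F}(\cdot, f(\cdot))$ on the evaluation points of the criterion, so minimizers correspond bijectively and the minimum values coincide.

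For part (ii), the identity is a direct substitution using $f^{-1}(f(Y_i)) = Y_i$ a.s.:
\[
\tilde{F}_{n, \hat{\alpha}_n}(\hat{\alpha}_n^{\top}X_i, t_i) = \hat{F}_{n, \hat{\alpha}_n}(\hat{\alpha}_n^{\top}X_i, f^{-1}(f(Y_i))) = \hat{F}_{n, \hat{\alpha}_n}(\hat{\alpha}_n^{\top}X_i, Y_i),
\]
and analogously $\tilde{F}_0(\alpha^{\top}X_i, t_i) = F_0(\alpha^{\top}X_i, Y_i)$; subtracting yields the claimed equality at the sample points. The extension to arbitrary $y \in \mathbb{R}$ with $t = f(y)$ follows because, under the interpolation \eqref{eq:y_interpolation}, $\hat{F}_n(z, \cdot)$ is piecewise constant with jumps at the distinct values of $Y_1, \ldots, Y_n$, and $\tilde{F}_n(z, \cdot)$ is piecewise constant with jumps at the corresponding transformed values $f(Y_i)$; strict monotonicity of $f$ on the sample ensures that the two piecewise indexings match one-to-one.

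The only delicacy lies in the almost-sure identities $f^{-1}(f(Y_i)) = Y_i$ and the invariance of the admissibility class $\mathcal{F}$ under $F \mapsto \tilde{F}$; both are controlled by strict monotonicity of $f$ on $\mathrm{supp}(\mathbb{P}^Y)$ together with $Y_i \in \mathrm{supp}(\mathbb{P}^Y)$ a.s., so they do not pose a real obstacle. Beyond these points, the proof is essentially algebraic bookkeeping.
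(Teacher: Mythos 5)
Your proposal is correct and follows essentially the same route as the paper: both rest on the a.s.\ identities $1\{Y_i \leq Y_j\} = 1\{f(Y_i) \leq f(Y_j)\}$ and $f^{-1}(f(Y_i)) = Y_i$, so that the criterion with $Q = \mathbb{P}_n^Y$ is unchanged under the substitution $Y_i \mapsto f(Y_i)$, $F \mapsto \tilde{F}$, and part (ii) is direct substitution. You merely spell out the admissibility of $\tilde{F}$ and the bijection between minimizers, which the paper leaves implicit.
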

The above result is generally not true for $Q \neq \mathbb{P}_n^Y$ in \eqref{eq:weighted_lsc}. The invariance property aligns well with the fact that the transformed outcome $f(Y)$ again follows a distributional single index model with the same parameter $\alpha_0$ and the corresponding CDFs $t \mapsto F_0(\alpha_0^{\top}x, f^{-1}(t))$. However, it turns out that deriving convergence rates for this criterion is substantially more difficult than for fixed measures $Q$, because the integral in the function class $\mathcal{M}$ in \eqref{eq:difficult_entropy_clas} is now over the random measure $\mathbb{P}_n$ instead of the fixed measure $Q$. We suspect that the rate for this estimator should still be of order $O_p(n^{-1/3})$, and our simulations confirm this intuition in certain examples. However, a completely different strategy of proof seems necessary to prove this rate.

\section{Empirical results} \label{sec:empirical}

\subsection{Simulations} \label{sec:simulations}

\begin{table}
	\centering
 \footnotesize
	\begin{tabular}{c|l|l|ccc|ccc}
		\toprule
		$Q$ & Simulation & Error type & \multicolumn{6}{c}{Spherical coordinates $\theta_0$} \\[0.2em]
	& & & $\pi/4$ & $\pi/3$ & $\pi/2$ & $(\pi/4, \pi/2)$ & $(\pi/3,\pi/3)$ & $(\pi/2, \pi/4)$ \\
 \midrule
\multirow[c]{6}{*}{Empirical} &  & Index & 0.49 (0.04) & 0.50 (0.04) & 0.63 (0.05) & 0.48 (0.03) & 0.47 (0.03) & 0.49 (0.03) \\
& Exponential & CDF & 0.25 (0.01) & 0.23 (0.01) & 0.36 (0.01) & 0.27 (0.01) & 0.19 (0.02) & 0.27 (0.01) \\
&  & Bundled & 0.36 (0.01) & 0.36 (0.01) & 0.37 (0.01) & 0.37 (0.01) & 0.37 (0.01) & 0.37 (0.01) \\[0.25em]
&  & Index & 0.49 (0.04) & 0.50 (0.05) & 0.60 (0.05) & 0.48 (0.03) & 0.55 (0.03) & 0.48 (0.03) \\
& Gaussian & CDF & 0.30 (0.01) & 0.30 (0.01) & 0.37 (0.02) & 0.33 (0.01) & 0.33 (0.01) & 0.34 (0.01) \\
&  & Bundled & 0.38 (0.01) & 0.38 (0.01) & 0.40 (0.01) & 0.39 (0.01) & 0.39 (0.01) & 0.39 (0.01) \\
\midrule
\multirow[c]{6}{*}{Truncated} &  & Index & 0.50 (0.04) & 0.53 (0.04) & 0.54 (0.05) & 0.51 (0.03) & 0.48 (0.03) & 0.46 (0.03) \\
& Exponential & CDF & 0.24 (0.02) & 0.24 (0.02) & 0.38 (0.03) & 0.29 (0.02) & 0.21 (0.02) & 0.27 (0.02) \\
&  & Bundled & 0.37 (0.01) & 0.37 (0.01) & 0.40 (0.02) & 0.39 (0.01) & 0.39 (0.01) & 0.40 (0.01) \\[0.25em]
&  & Index & 0.46 (0.04) & 0.48 (0.05) & 0.55 (0.05) & 0.48 (0.03) & 0.55 (0.03) & 0.50 (0.03) \\
& Gaussian & CDF & 0.28 (0.01) & 0.29 (0.02) & 0.37 (0.02) & 0.32 (0.02) & 0.30 (0.01) & 0.33 (0.02) \\
&  & Bundled & 0.38 (0.01) & 0.38 (0.01) & 0.41 (0.01) & 0.39 (0.01) & 0.40 (0.01) & 0.40 (0.01) \\
\midrule
\multirow[c]{6}{*}{Uniform} &  & Index & 0.56 (0.04) & 0.55 (0.04) & 0.61 (0.05) & 0.45 (0.03) & 0.49 (0.03) & 0.48 (0.03) \\
& Exponential & CDF & 0.21 (0.02) & 0.21 (0.02) & 0.38 (0.02) & 0.27 (0.03) & 0.19 (0.03) & 0.24 (0.03) \\
&  & Bundled & 0.37 (0.01) & 0.36 (0.01) & 0.39 (0.01) & 0.37 (0.01) & 0.38 (0.01) & 0.39 (0.01) \\[0.25em]
&  & Index & 0.49 (0.04) & 0.46 (0.04) & 0.56 (0.05) & 0.48 (0.03) & 0.53 (0.03) & 0.49 (0.03) \\
& Gaussian & CDF & 0.26 (0.02) & 0.27 (0.02) & 0.38 (0.02) & 0.29 (0.02) & 0.22 (0.02) & 0.29 (0.02) \\
&  & Bundled & 0.38 (0.01) & 0.38 (0.01) & 0.41 (0.01) & 0.39 (0.01) & 0.39 (0.01) & 0.39 (0.01) \\
\bottomrule
	\end{tabular}
	\caption{Estimated convergence rates and standard errors (in parentheses) in the simulation examples \eqref{eq:sim_examples}, with $\theta_0$ giving the spherical coordinates for $\alpha_0$. The rates are the slope coefficient from regressing $\log(\mathrm{err}_{n,i})$ on $-\!\log(n)$, with $n = 2^8, 2^9, \dots, 2^{13}$ and samples $i = 1, \dots, 100$, for each error measure, simulation setting, and weighting measure for our estimator. The rate in the index estimation is mostly close to $1/2$, while the rates for the other measures are around $1/3$. \label{tab:estimated_rates}}
\end{table}

We investigate the convergence of our estimators in simulations. For $d = 2, 3$, we simulate $X_j \sim \mathrm{Unif}(0,1)$, $j = 1, \dots, d$, independently, and generate the response variable in two ways,
\begin{equation} \label{eq:sim_examples}
	Y^{(1)} = (\alpha_0^{\top}X)^3 \varepsilon, \ \varepsilon \sim \mathcal{N}(0,1), \qquad Y^{(2)} =  (\alpha_0^{\top}X)^3\eta, \ \eta \sim \mathrm{Exp}(1).
\end{equation}
For the weighting measure $Q$, we consider the empirical distribution $\mathbb{P}^Y_n$, the uniform distribution on $[-10, 10]$ and the Gaussian distribution with variance $4$ truncated to the interval $[-4, 10]$ for the simulations with Gaussian noise, and the uniform distribution on $[0,50]$ and the truncated Gamma distribution with shape $3$ and scale $1$ for the simulations with exponentially distributed noise, respectively. The rationale is that the uniform distribution over a large set provides a rather rough choice for the weighting, whereas the truncated distributions more closely follow the actual outcome distributions, up to truncation to a compact interval.

The index $\alpha_0$ is parameterized in spherical coordinates with $\theta_0 \in [0,2\pi]$ and values $\theta_0 = \pi/4, \, \pi/3, \, \pi/2$ for $d = 2$, and $\theta_0 \in [0,\pi] \times [0,2\pi]$ and values $\theta_0 = (\pi/4, \pi/2), \, (\pi/3,\pi/3), \, (\pi/2, \pi/4)$ for $d = 3$. To perform estimation, we parameterize $\alpha$ in spherical coordinates and do a grid search followed by local numerical optimization. For $d = 2$, we choose $40$ equidistant points $\theta_1 = 0 < \theta_2 < \dots < \theta_{40} = 2\pi$, evaluate the criterion \eqref{eq:weighted_lsc} at $\alpha_j = (\cos(\theta_j), \sin(\theta_j))$, and perform numerical optimization of $\eqref{eq:weighted_lsc}$ with respect to $\theta$ in $\alpha = (\cos(\theta), \sin(\theta))$ around the $\theta_j$ for which the minimal value of the criterion is attained. The procedure for $d = 3$ is analogous, and for the grid we take all combinations of $20$ equidistant points $\theta_{1,j} \in [0,\pi]$ and $40$ points $\theta_{2,k} \in [0,2\pi]$, $j = 1, \dots, 20$, $k = 1, \dots, 40$. Numerical optimization is performed with {\tt optimize} in \textsf{R} \citep{Rcore2022} for $d = 2$, and {\tt nmkb} from the package {\tt dfoptim} \citep{Varadhan2020} for $d = 3$. Estimation of the conditional CDFs uses the {\tt isodistrreg} package \citep{HenziZiegelGneiting2021}. A general implementation of our estimator and replication material for Section \ref{sec:empirical} are available on \url{https://github.com/AlexanderHenzi/distr_single_index}.

To estimate the rates of convergence, we simulate $100$ realizations of the examples described above for each of the sample sizes $n = 2^m$, $m = 8, 9, \dots, 13$, and compute the the index error $\|\hat{\alpha}_n - \alpha_0\|$, the bundled error $L(\hat{F_n}, \hat{\alpha}_n)$, and of the error of the CDFs $L_{\mathrm{CDF}}(\hat{F}_n)$. The integrals in $L(\hat{F_n}, \hat{\alpha}_n)$ and $L_{\mathrm{CDF}}(\hat{F}_n)$ are estimated with the mean of the integrand evaluated at $5000$ draws for $y \sim \mathbb{P}^Y$ and $x \sim \mathbb{P}^X$, or $z \sim \mathrm{Unif}(\munderbar{c}, \bar{c})$, respectively. We then estimate the convergence rate with the slope coefficient from regressing $\log(\mathrm{err}_{n,i})$, for all $n$ and samples $i = 1, \dots, 100$, on $-\!\log(n)$, for each setting and error measure. The estimates and standard errors are shown in Table \ref{tab:estimated_rates}. Naturally, there are many factors influencing the convergence rates estimates, such as noise in the estimation, different constants in different examples, and, most importantly, the fact that the rates of the errors are only estimated on a grid of finite sample sizes. Therefore, even if one might expect the same asymptotic rates in the examples that we consider, there are some deviations due to different constants and finite sample effects. However, Table \ref{tab:estimated_rates} suggests that the rate of $\hat{\alpha}_n$ is faster than $n^{-1/3}$, as in the experiments of \citet{BalabdaouiDurotJankowski2019}, and the rates for the bundled estimator and for the CDF are around $n^{-1/3}$. There are no systematic differences between the results for $Q = \mathbb{P}_n^Y$ and for the other approaches, with average rates over all settings of $0.51$, $0.28$, and $0.39$ for the index, CDF, and bundled estimator for the empirical weighting measure, and $0.51$, $0.30$, and $0.38$ for the other weighting methods. This suggests that the same rates should hold for $Q = \mathbb{P}^Y_n$.

\begin{remark}
For dimension $d = 1$, the computation of $\hat{\alpha}_n$ is a one-dimensional optimization problem, and $\hat{\alpha}_n$ can be approximated to a high accuracy provided that the grid for the initial grid search is fine enough. For $d > 2$ the grid search becomes expensive, and there are no guarantees that a pair $(\tilde{\alpha}_n, \tilde{F}_n)$ chosen by our implementation is a global minimizer of our target function, which is non-smooth and non-convex. Estimation in the monotone single index model for the mean suffers from the same optimization difficulties, and although there has been extensive research on implementation and alternative methods for estimating $\hat{\alpha}_n$ \citep{Groeneboom2018, BalabdaouiGroeneboomHendrickx2019, GroeneboomHendrickx2019, BalabdaouiGroeneboom2021}, the computation of $\hat{\alpha}_n$ remains a challenge, especially in higher dimensions.
\end{remark}

\subsection{Illustration on house price data} \label{sec:data}

\begin{figure}[t]
	\includegraphics[width=\textwidth]{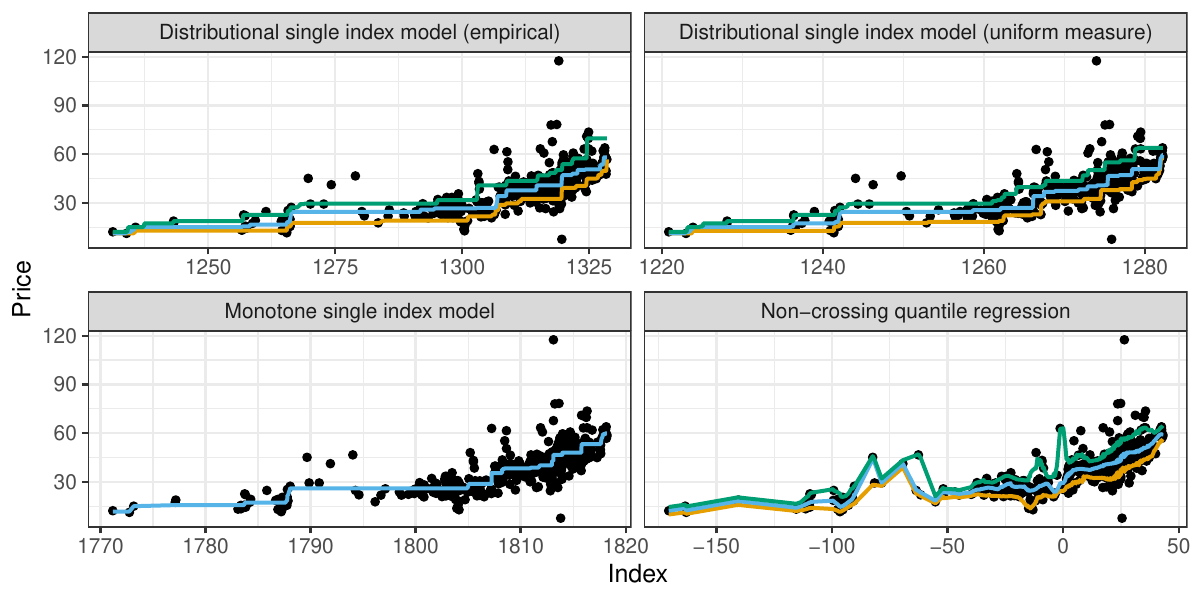}
	\caption{Pairs $(\hat{\alpha}_n^{\top}X_i, Y_i)$, $i = 1, \dots, 414$, for the distributional single index model, the monotone single index model, and for non-crossing quantile regression. The lines for the distributional methods are estimated conditional quantile curves at the levels $\tau = 0.1, 0.5, 0.9$. \label{fig:quantiles_plot}}
\end{figure}

We illustrate the distributional single index model in a data example by \citet[Section 4.4]{Jiang2023}. The data set, which is available on \url{https://doi.org/10.24432/C5J30W}, contains 414 real estate transaction records from Tapiei City and New Taipei City. The dependent variable is the price per unit area, and the covariates are the number of convenience stores in the living circle on foot, the building age, the transaction year and month, and the distance to the nearest metro station. The transaction time is transformed to a numerical variable with values in between $2012.67$ and $2013.58$, and it is a proxy variable which captures effects such as trends in the house prices, or different policy regimes over time that might influence the prices.

Figure \ref{fig:quantiles_plot} depicts the index values $\hat{\alpha}_n^{\top}X_i$ and prices $Y_i$, $i = 1, \dots, 414$, for the distributional single index model, the monotone single index model, and for the non-crossing quantile regression estimator by \citet{Jiang2023}; the results for the latter are equal to their Figure 3 (c) and reproduced with the code from the supplement of their article. We implemented the distributional index model with the empirical measure and with the uniform measure over a large set including all observed prices. For the distributional methods, the lines in the figure show estimated conditional quantiles at levels $\tau = 0.1, 0.5, 0.9$, which are obtained by inversion of the CDFs for our estimator. \citet{Jiang2023} center all covariates around their mean before estimation. With shape restricted estimation methods, such centering is not necessary since it does not change the order of the projections $\hat{\alpha}_n^{\top}X_i$. As the scatterplots suggests, the order of the index values $\hat{\alpha}_n^{\top}X_i$, $i = 1, \dots, 414$, obtained with the three methods are very similar, and the pairwise Spearman correlations between them are indeed all above $0.98$.
In the given data application, all methods have advantages and disadvantages. The computation of the estimator by \citet{Jiang2023} is fast, but it involves several tuning parameters, namely, an initial quantile level for estimation, set to $\tau = 0.5$, bandwidths for kernel smoothing, and a pre-specified grid of quantiles on which the estimator is computed and evaluated, chosen to be $\tau = 0.1, 0.2, \dots, 0.9$. Estimation for our method and for the monotone single index model is slower, since we take a fine grid for the grid search over $\alpha$ and perform local optimization in several regions to ensure a good approximation of the minimum. However, the parameters of the shape restricted methods are more easily interpretable due to the monotone dependence on $\hat{\alpha}_n^{\top}X$. One can draw the --- reasonable --- conclusions that the price is increasing in the number of closely situated convenience stores and over time, and decreasing in the distance to the nearest metro station and in the age of the building; see Table \ref{tab:index_estimates}. The interpretation is more difficult for the estimator by \citet{Jiang2023}. Although the signs of $\hat{\alpha}_n$ in their estimator agree with those of the shape restricted methods, the conditional quantile curves are non-monotone and interpolate the prices for some of the observations.

\begin{table}
	\centering
	\small
	\begin{tabular}{l| cccc}
		\toprule
		Method & Number stores & Building age & Transaction date & Distance metro\\
		\midrule
        Distributional index model (empirical) & 0.706 & -0.263 & 0.658 & -0.013\\
        Distributional index model (uniform) & 0.750 & -0.186 & 0.634 & -0.008\\
        Monotone single index model & 0.415 & -0.122 & 0.902 & -0.006\\
        Non-crossing quantile regression & 0.152 & -0.060 & 0.987 & -0.004\\
		\bottomrule
	\end{tabular}
	\caption{Estimates $\hat{\alpha}_n$ for the different methods. For the non-crossing quantile regression, we show the entries for $\mathrm{NCCQR}_9$ from Table 6 of \citet{Jiang2023}, standardized to norm $1$ for comparability. \label{tab:index_estimates}}
\end{table}

\section{Discussion} \label{sec:discussion}
In this article, we proposed estimators for the distributional single index model, and proved a convergence rate of $O_P(n^{-1/3})$ both for bundled and separated estimators. This greatly improves upon the $(\log(n)/n)^{1/6}$-rate known so far. There are several avenues for future research. Consistency for our transformation-invariant estimator proposed in Section \ref{sec:transformation_invariant} is an open challenge, which goes beyond the techniques applied for the convergence rates in this article. A possible future research direction is to study convergence under more general weighting measures $Q$ with possibly an unbounded support. This would allow analyzing whether there is an optimal choice of $Q$ in terms of the estimation error for $\alpha_0$. As for the monotone single index model, our simulations also suggest that $\alpha_0$ is estimated at a faster rate. Deriving this rate, as well as a comparison to the estimators for $\alpha_0$ in the monotone single index model, would be an interesting direction for future work.

\section*{Acknowledgments}
We are grateful to an anonymous referee for helpful comments.

\setlength{\bibsep}{0pt plus 0.3ex}
\bibliographystyle{apalike}
\bibliography{references}

\appendix

\section{Proofs} \label{sec:proofs}

\subsection{Proof of Theorem \ref{thm:bundled}} \label{sec:proof_bundled}

The proof of Theorem \ref{thm:bundled} is slightly different for the two cases in Assumption \ref{assp:measure_Q}, which involve different entropy calculations. We first give a proof for the theorem with an unspecified constant in an entropy bound, and then derive the constant for the two cases in separate lemmas.

\begin{proof}[Proof of Theorem \ref{thm:bundled}]
The proof applies Theorem 3.4.1 and Lemma 3.4.2 of \citet{vanderVaartWellner1996}.
\begin{align*}
    \mathbb{M}_n(F, \alpha) & = \int_{\mathbb{R}}\int_{\mathbb{R}^d \times \mathbb{R}}(1\{y \leq t\} - F(\alpha^{\top}x, t))^2 \, d\mathbb{P}_n(x,y)dQ(t), \\
    \mathbb{M}(F, \alpha) & = \int_{\mathbb{R}}\int_{\mathbb{R}^d \times \mathbb{R}}(1\{y \leq t\} - F(\alpha^{\top}x, t))^2 \, d\mathbb{P}(x,y)dQ(t).
\end{align*}
Expanding the squares and using the fact that $\mathbb{E}[1\{Y \leq t\} | X = x] = F_0(\alpha_0^{\top}x, t)$ yields
\[
    \mathbb{M}(F,\alpha) - \mathbb{M}(F_0, \alpha_0) = \int_{\mathbb{R}}\int_{\mathbb{R}}(F(\alpha^{\top}x, t) - F_0(\alpha_0^{\top}x, t))^2 \, d\mathbb{P}(x)dQ(t) =: d((F,\alpha), (F_0, \alpha_0))^2.
\]
Furthermore, we have
\[
    \mathbb{M}_n(F, \alpha) - \mathbb{M}(F, \alpha) = \int_{\mathbb{R}}\int_{\mathbb{R}^d \times \mathbb{R}}(1\{y \leq t\} - F(\alpha^{\top}x, t))^2 dQ(t) \, d(\mathbb{P}_n(x,y)-\mathbb{P}(x,y)),
\]
or, when rescaling with $\sqrt{n}$ and using empirical process notation,
\[
    \sqrt{n}\left(\mathbb{M}_n(F, \alpha) - \mathbb{M}(F, \alpha)\right) = \mathbb{G}_n \int_{\mathbb{R}}(1\{y \leq t\} - F(\alpha^{\top}x, t))^2 dQ(t)
\]
We now analyze the functions of the form
\[
    \ell(x,y) = \int_{\mathbb{R}}(1\{y \leq t\} - F(\alpha^{\top}x, t))^2 dQ(t)
\]
with $(F, \alpha) \in \mathcal{F}$, and denote the class of such functions by $\mathcal{L}$. Also, let $\mathcal{L}_{\delta}$ contain all functions of type
\[
    \tilde{\ell}(x,y) = \ell(x,y) - \int_{\mathbb{R}}(1\{y \leq t\} - F_0(\alpha_0^{\top}x, t))^2 dQ(t)
\]
with $\ell \in \mathcal{L}$ and for which
\[
    \delta^2 \geq \|\tilde{\ell}\|_{\mathbb{P}}^2 = d((F,\alpha), (F_0, \alpha_0))^2 = \mathbb{M}(F,\alpha) - \mathbb{M}(F_0, \alpha_0).
\]
The elements in $\mathcal{L}_{\delta}$ are obtained by shifting elements of $\mathcal{L}$ by a fixed function, so we have $N_B(\varepsilon, \mathcal{L}_{\delta}, \|\cdot\|_{\mathbb{P}}) \leq N_B(\varepsilon, \mathcal{L}, \|\cdot\|_{\mathbb{P}})$. To apply Lemma 3.4.2 of \citet{vanderVaartWellner1996}, we have to find an upper bound for the bracketing entropy of the class $\mathcal{L}$.
Since $Q$ is a finite measure, we have
\begin{align*}
    \ell(x,y) & = \underbrace{1-Q([t,\infty))}_{=:f(t)}
    + \underbrace{\int_{\mathbb{R}}F(\alpha^{\top}x, t)^2 dQ(t)}_{=:g(x)}
    + \underbrace{\int_{[y,\infty)}F(\alpha^{\top}x, t) dQ(t)}_{=:h(x,y)}.
\end{align*}
The function $f$ above does not contribute to the entropy, and $g$ does not depend on $y$ and belongs to the class $\mathcal{G}$, for which we know from Assumptions \ref{assp:space} and \ref{assp:measure_Q} and Proposition \ref{prop:entropy} that $\log(N_B(\varepsilon, \mathcal{G}, \|\cdot\|_{\mathbb{P}^X})) \leq \tilde{C}/\varepsilon$ for a constant $\tilde{C} > 0$. In separate lemmas below, we show that the entropy of the functions of the form $h$ above, with $(F,\alpha) \in \mathcal{F}$, is bounded from above by $\tilde{D}/\varepsilon$ for some constant $\tilde{D}$. 
Let now $[l, u]$ be an $\varepsilon$-bracket containing $g$ and $[L, U]$ an $\varepsilon$-bracket containing $h$. We interpret $l,u$ as functions of $(x,y)$ which are constant in $y$. Then the functions $U + u + 1-Q([t,\infty)$, $L + l + 1-Q([t,\infty)$ form a $(2\varepsilon)$-bracket containing $\ell$, because
\begin{align*}
    \|U + u - L - l\|_{\mathbb{P}}^2 & =  \int_{\mathbb{R}^d \times \mathbb{R}} \left \{ (U-L)^2 + (u-l)^2 + 2(U-L)(u-l) \right \} \, d \mathbb{P}(x,y) \\
    & \leq 2\varepsilon^2 + 2 \left(\int_{\mathbb{R}^d \times \mathbb{R}}(U-L)^2\, d \mathbb{P}(x,y)\right)^{1/2}\left(\int_{\mathbb{R}^d \times \mathbb{R}}(u-l)^2 \, d \mathbb{P}(x,y)\right)^{1/2} \\
    & \leq 4\varepsilon^2.
\end{align*}
Consequently, the number of $\varepsilon$-brackets required to cover $\mathcal{L}$ is bounded from above by $2(\tilde{C} +\tilde{D})/\varepsilon =: \kappa/\varepsilon$, which yields the following bound on the entropy integral,
\[
    \tilde{J}(\delta, \mathcal{L}, \|\cdot\|_{\mathbb{P}}) = \int_0^{\delta}\sqrt{1+\log N_B(\varepsilon, \mathcal{L}, \|\cdot\|) \, d\varepsilon} \leq \int_0^{\delta}1 + \left(\frac{\kappa}{\varepsilon}\right)^{1/2} \, d\varepsilon = \delta + 2\kappa^{1/2}\delta^{1/2}.
\]
Lemma 3.4.2 of \citet{vanderVaartWellner1996} with $M = 2$ implies
\begin{align*}
    & \mathbb{E}\left[\Big\Vert\mathbb{G}_n \int_{\mathbb{R}}(1\{y \leq t\} - F(\alpha^{\top}x, t))^2 dQ(t) - \int_{\mathbb{R}}(1\{y \leq t\} - F_0(\alpha_0^{\top}x, t))^2 dQ(t)  \Big\Vert_{\mathcal{L}_{\delta}}\right] \\
    & \leq (\delta + 2\kappa^{1/2}\delta^{1/2})\left(1+2\frac{\delta + 2\kappa^{1/2}\delta^{1/2}}{\delta^2n^{1/2}}\right).
\end{align*}
Consequently, with
\begin{align*}
    \tilde{\phi}_n(\delta) & := (\delta + 2\kappa^{1/2}\delta^{1/2})\left(1+2\frac{\delta + 2\kappa^{1/2}\delta^{1/2}}{\delta^2n^{1/2}}\right) \\
    \phi_n(\delta) & := \tilde{\phi}_n(\delta) / \phi_n(1),
\end{align*}
we have
\begin{align*}
    \mathbb{E}\left[\sup_{(F,\alpha)\colon d((F,\alpha), (F_0, \alpha_0)) \leq \delta} |(\mathbb{M}_n - \mathbb{M})(F,\alpha) - (\mathbb{M}_n-\mathbb{M})(F_0, \alpha_0)| \right] \leq \phi_n(\delta)
\end{align*}
and, for $r_n = n^{2/3}$, $r_n^2\phi_n(1/r_n) \leq n^{1/2}$. Since $(\hat{F}_n, \hat{\alpha}_n)$ maximizes $\mathbb{M}_n$ by definition, Theorem 3.4.2 of \citet{vanderVaartWellner1996} implies that $n^{1/3}d((\hat{F}_n, \hat{\alpha}_n), (F_0, \alpha_0)) = O_p(1)$.
\end{proof}

For the entropy of the function class
\[
    \mathcal{M} = \left\{h\colon \mathbb{R}^d\times\mathbb{R}\rightarrow[0,1], \, h(x,y) = \int_{[y,\infty]}F(\alpha^{\top}x, t) dQ(t),\, (F,\alpha) \in \mathcal{F}\right\}
\]
we begin with the simpler case that $Q$ has finite support.

\begin{lemma} \label{lem:entropy_finite_supp}
Under Assumptions \ref{assp:space} and \ref{assp:measure_Q} (i), we have
\[
    \log(N_B(\varepsilon, \mathcal{M}, \|\cdot\|_{\mathbb{P}})) \leq \frac{\tilde{C} + p}{\varepsilon}, \quad \varepsilon \in (0,1),
\]
where $\tilde{C} = \tilde{C}(d, R, \bar{p}_X)$ is the constant from Proposition \ref{prop:entropy}, and $p$ is the cardinality of the finite support of $Q$.
\end{lemma}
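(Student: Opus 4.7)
The plan is to exploit the finite support $\{t_1,\dots,t_p\}$ of $Q$ to write every $h\in\mathcal{M}$ as
\[
    h(x,y) = \sum_{i=1}^p q_i F(\alpha^\top x, t_i)\,\mathbf{1}\{t_i\geq y\}, \qquad q_i=Q(\{t_i\}),
\]
and to build brackets for $h$ out of brackets for the $p$ functions $\phi_i(x):=F(\alpha^\top x,t_i)$, each of which belongs to $\mathcal G$. Given any brackets $[L_i,U_i]$ with $L_i \leq \phi_i \leq U_i$ in $\mathcal{G}$, the pair
\[
    L(x,y) = \sum_{i:\,t_i\geq y} q_i L_i(x), \quad U(x,y) = \sum_{i:\,t_i\geq y} q_i U_i(x)
\]
brackets $h$ on $\mathcal X\times\mathbb R$. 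Using $(\sum_{i:t_i\geq y}a_i)^2\leq(\sum_i a_i)^2$ for $a_i\geq 0$ and Cauchy--Schwarz with weights $q_i$,
\[
    \|U-L\|_{\mathbb P}^2 \leq \int\Bigl(\sum_{i=1}^p q_i(U_i-L_i)(x)\Bigr)^2 d\mathbb P^X(x) \leq Q(\mathbb{R})\sum_{i=1}^p q_i\|U_i-L_i\|_{\mathbb P^X}^2,
\]
so choosing each $\|U_i-L_i\|_{\mathbb P^X}$ proportional to $\varepsilon$ (with a constant depending on the total mass of $Q$) yields $\|U-L\|_{\mathbb P}\leq \varepsilon$.

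The counting argument proceeds in two layers. First, Proposition~\ref{prop:entropy} applied to $\mathbb P^X$, whose density is bounded above by $\bar p_X$ under Assumption~\ref{assp:measure_Q}(i), gives $\log N_B(\varepsilon,\mathcal G,\|\cdot\|_{\mathbb P^X})\leq\tilde C/\varepsilon$; this single bracketing already accounts for the joint variation in $\alpha$ and in the monotone profile on $[-2R,2R]$ for one value of $t$. Second, once $\alpha$ is approximately fixed, the remaining freedom in $(\phi_1,\dots,\phi_p)$ reduces to choosing $p$ non-increasing functions $h_i(z)=F(z,t_i)$ on $[-2R,2R]$ with values in $[0,1]$, and these can be bracketed independently by the classical estimate $\log N_B(\varepsilon,\mathcal H,\|\cdot\|)\lesssim 1/\varepsilon$ for bounded monotone functions on a bounded interval, contributing an additional $\exp(p/\varepsilon)$ brackets. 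The chain constraint $h_1\leq\cdots\leq h_p$ forced by the CDF property is automatically respected and can only tighten the count. Multiplying the two bracket counts yields the target bound $\log N_B(\varepsilon,\mathcal M,\|\cdot\|_{\mathbb P})\leq(\tilde C+p)/\varepsilon$.

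The main obstacle is making the two-layer decoupling rigorous. A $\mathcal G$-bracket does not pin down $\alpha$ uniquely, since two distinct pairs $(\alpha,h)$ and $(\tilde\alpha,\tilde h)$ can produce functions lying in the same bracket. Treating each $\phi_i$ as an independent element of $\mathcal G$ and taking a product bracket only yields the multiplicative bound $p\tilde C/\varepsilon$, because the $\alpha$-covering cost is then paid $p$ times. To obtain the additive bound, one must construct brackets of $\mathcal F$ that first localize $\alpha$ (for example via a sphere cover whose log-size is $O(d\log(1/\varepsilon))$ and absorbable into $\tilde C/\varepsilon$) and only then refine the $p$ monotone height profiles $h_i$ independently, while uniformly controlling the error introduced by replacing $F(\alpha^\top x,t_i)$ with $F(\tilde\alpha^\top x,t_i)$ across all $i=1,\dots,p$. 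This careful decoupling, together with a proper choice of bracket radii at each layer, is the crux of the proof.
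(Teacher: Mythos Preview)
Your bracket construction $L(x,y)=\sum_{i:t_i\ge y}q_iL_i(x)$, $U(x,y)=\sum_{i:t_i\ge y}q_iU_i(x)$ and the size estimate via Cauchy--Schwarz with weights $q_i$ are exactly what the paper does. Where you diverge is in the counting: you note that bracketing each $\phi_i\in\mathcal G$ independently yields $N^p$ product brackets and hence only $p\tilde C/\varepsilon$, and you then propose a two-layer decoupling (sphere cover for $\alpha$ first, then $p$ monotone profiles) to recover the additive constant $\tilde C+p$.

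The paper does \emph{not} carry out any such decoupling. It brackets each $\phi_j$ independently in $\mathcal G$ and then asserts that ``there are $pN$ functions of the form of $L,U$, corresponding to $N$ choices for $l_{i(j)},u_{i(j)}$ and $p$ choices of $t_j$'', from which $\log(pN)\le\tilde C/\varepsilon+p$ is deduced. Read literally, however, the construction is indexed by the tuple $(i(1),\dots,i(p))\in\{1,\dots,N\}^p$ and produces $N^p$ brackets, not $pN$; so the paper's argument, taken at face value, actually delivers precisely your ``naive'' bound $p\tilde C/\varepsilon$. The stated constant $\tilde C+p$ appears to be a slip in the count rather than the result of a sharper argument.

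For the application to Theorem~\ref{thm:bundled} this is immaterial: all that is needed is an entropy of order $1/\varepsilon$, and $p\tilde C/\varepsilon$ provides that. Hence your first-layer product argument already completes the proof at the level of rigor the paper requires, and the two-layer refinement you flag as ``the crux'' is unnecessary --- the paper never attempts it. Incidentally, even if carried out, that refinement would give a bound of the form $(c_{\mathrm{sphere}}+c_{\mathrm{mono}}\,p)/\varepsilon$ with $c_{\mathrm{mono}}$ the universal constant from the monotone-class entropy, so it still does not literally produce $(\tilde C+p)/\varepsilon$; you should not spend effort chasing that exact constant.
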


\begin{proof}[Proof of Lemma \ref{lem:entropy_finite_supp}]
    
Recall that $Q$ puts all its mass on the points $t_1 < \ldots < t_p$. Let $l_i, u_i$, $i = 1, \dots, N$ be $\varepsilon$-brackets covering $\mathcal{G}$, and let $l_{i(j)}, u_{i(j)}$ be an $\varepsilon$-bracket containing $x \mapsto F(\alpha^{\top}x, t_j)$, $j = 1, \dots, m$. Then,
\[
    L(x,y) := \sum_{j\colon t_j \geq y} Q(\{t_j\}) l_{i(j)}(x), \quad U(x,y) := \sum_{j\colon t_j \geq y} Q(\{t_j\}) u_{i(j)}(x)
\]
are an $\varepsilon$-bracket containing $h$, because
\begin{align*}
    \|U - L\|_{\mathbb{P}}^2 & = \int_{\mathbb{R}^d \times \mathbb{R}} \left(\sum_{j\colon t_j \geq y} Q(\{t_j\}) (u_{i(j)}(x) - l_{i(j)}(x))\right)^2 \, d\mathbb{P}(x,y) \\
    & \leq \int_{\mathbb{R}^d \times \mathbb{R}} \sum_{j\colon t_j \geq y} Q(\{t_j\}) (u_{i(j)}(x) - l_{i(j)}(x))^2 \, d\mathbb{P}(x,y) \\
    & \leq \int_{\mathbb{R}^d \times \mathbb{R}} \sum_{j=1}^p Q(\{t_j\}) (u_{i(j)}(x) - l_{i(j)}(x))^2 \, d\mathbb{P}(x,y) \\
    & =  \int_{\mathbb{R}^d} \sum_{j=1}^p Q(\{t_j\}) (u_{i(j)}(x) - l_{i(j)}(x))^2 \, d\mathbb{P}(x) \\
    & \leq \varepsilon^2.
\end{align*}
Moreover, there are $pN$ functions of the form of $L, U$, corresponding to $N$ choices for $l_{i(j)}, u_{i(j)}$ and $p$ choices of $t_j$. So for $\varepsilon \in (0,1)$, we have
\[
    \log(N_B(\varepsilon, \mathcal{M}, \|\cdot\|_{\mathbb{P}})) \leq \tilde{C}/\varepsilon + p \leq \frac{\tilde{C}+p}{\varepsilon}.
\]
\end{proof}

For $Q$ with Lebesgue continuous distribution, the entropy bound is as follows.

\begin{lemma} \label{lem:entropy_continuous_q}
Under Assumptions \ref{assp:space} and \ref{assp:measure_Q} (ii), we have
\[
    \log(N_B(\varepsilon, \mathcal{M}, \|\cdot\|_{\mathbb{P}})) \leq \frac{3\tilde{C}\max(1,c) + b - a + 1}{\varepsilon}, \quad \varepsilon \in (0,1),
\]
where $\tilde{C} = \tilde{C}(d, R, \bar{p}_X)$ is the constant from Proposition \ref{prop:entropy}.
\end{lemma}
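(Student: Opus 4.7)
The plan is to extend the proof of Lemma \ref{lem:entropy_finite_supp} from finite to continuous $Q$ by a discretization of $[a, b]$. Choose a grid $a = t_0 < t_1 < \dots < t_p = b$ with spacing $\delta = \varepsilon/\max(1, c)$, so that $c\delta \leq \varepsilon$ and $p \leq (b-a)\max(1, c)/\varepsilon + 1$. At each $t_j$, the function $x \mapsto F(\alpha^\top x, t_j)$ belongs to $\mathcal{G}$ by monotonicity in $\alpha^\top x$, and Proposition \ref{prop:entropy} provides an $\varepsilon$-bracket $[l_{i(j)}, u_{i(j)}]$ containing it from a cover of size $N \leq \exp(\tilde{C}/\varepsilon)$.

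Construct $\varepsilon$-brackets for $h$ in parallel with Lemma \ref{lem:entropy_finite_supp}, by Riemann sums with a Lipschitz correction: for $y \in [t_{k-1}, t_k)$,
\begin{equation*}
    L(x, y) = \sum_{j=k+1}^{p} Q([t_{j-1}, t_j])\, l_{i(j-1)}(x), \qquad U(x, y) = c\delta + \sum_{j=k+1}^{p} Q([t_{j-1}, t_j])\, u_{i(j)}(x).
\end{equation*}
The inclusion $L \leq h \leq U$ follows by bounding $F(\alpha^\top x, t)$ on each interval $[t_{j-1}, t_j]$ via monotonicity in $t$ (by $F$ at the left and right endpoints), then by the $l$ and $u$ brackets, and absorbing the partial first sub-interval into $c\delta$ since $F \leq 1$ and $q \leq c$. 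The $\|\cdot\|_{\mathbb{P}}$-width is controlled by summing the weighted $\mathcal{G}$-bracket widths (each at most $\varepsilon$) against the subprobability weights $Q([t_{j-1}, t_j])$, plus the $c\delta \leq \varepsilon$ Lipschitz term; the cross-term arising because $L$ uses $l_{i(j-1)}$ while $U$ uses $u_{i(j)}$ is bounded by telescoping $\sum_j Q([t_{j-1}, t_j])(F(\alpha^\top x, t_j) - F(\alpha^\top x, t_{j-1})) \leq c\delta \cdot [F(\alpha^\top x, t_p) - F(\alpha^\top x, t_0)] \leq c\delta$. Assumption \ref{assp:measure_Q}(ii) translates $\|\cdot\|_{\mathbb{P}^X}$-widths to $\|\cdot\|_{\mathbb{P}}$-widths through the bounded conditional density $p_{X|Y}$.

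Finally, counting distinct configurations and mirroring the additive accounting of Lemma \ref{lem:entropy_finite_supp} (one $\mathcal{G}$-entropy contribution plus the $p$ discretization points), one arrives at $\log N_B \leq \tilde{C}/\varepsilon + p$; substituting $p \leq (b-a)\max(1, c)/\varepsilon + 1$ and using $\varepsilon \in (0, 1)$ produces the announced bound $(3\tilde{C}\max(1, c) + (b - a + 1))/\varepsilon$, where the factor $3$ absorbs the constants arising from balancing bracket precision, Lipschitz correction, and the density factor. The main technical hurdle is preserving this additive counting structure in the continuous-$Q$ setting; a naive $N^p$ enumeration of bracket tuples would otherwise inflate the entropy to order $1/\varepsilon^2$, incompatible with the cubic rate in Theorem \ref{thm:bundled}.
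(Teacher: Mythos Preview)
Your discretization idea is right, but both the bracketing construction and the decisive counting step differ from the paper's, and the counting is where your argument has a real gap. The paper does not bracket the pointwise slices $x\mapsto F(\alpha^\top x,t_j)$ and then form Riemann sums. It instead brackets the \emph{cumulative} functions $h_j(x)=\int_{[a,t_j]}F(\alpha^\top x,t)\,dQ(t)$, each of which is already a single element of $\mathcal{G}$, on a grid of $N'=\lceil(b-a)/\varepsilon\rceil$ points. For $y$ in the $j$th cell the target $h(x,y)$ is sandwiched between two \emph{consecutive} cumulative values, so the paper's lower and upper brackets at that $y$ are single $\mathcal{G}$-bracket functions $l_{i(r(y))}(x)$ and $u_{i(s(y))}(x)$, not weighted sums over the whole grid. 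The width then splits as $(u_{i(j+1)}-h_{j+1})+(h_{j+1}-h_j)+(h_j-l_{i(j)})$; squaring gives nine terms, each bounded by $\max(1,c)^2\varepsilon^2$, and no telescoping across all cells is needed.

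The gap in your proposal is the count. Your $L$ and $U$ depend on the full tuple $(i(0),\dots,i(p))$, one $\mathcal{G}$-index per grid point, and you assert $\log N_B\le\tilde C/\varepsilon+p$ purely by analogy with Lemma~\ref{lem:entropy_finite_supp}. In that lemma $p$ is a \emph{fixed} integer independent of $\varepsilon$, so even a crude $N^p$ enumeration still yields $p\tilde C/\varepsilon=O(1/\varepsilon)$; here $p\sim\max(1,c)(b-a)/\varepsilon$, and the same enumeration gives $\sim 1/\varepsilon^2$ --- precisely the blow-up you warn about in your last sentence but never actually avoid. The paper's cumulative-integral device is what circumvents this: because the bracket at each $y$ invokes a single $\mathcal{G}$-index (determined by the cell containing $y$) rather than a sum over all $p$ cells, the paper argues that the bracket family is parameterized by pairs (one $\mathcal{G}$-index, one grid cell), a total of $N(N'+1)$ functions, so the grid contributes only $\log(N'+1)$ rather than $N'$ to the log-entropy. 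Your Riemann-sum brackets cannot be parameterized as such a product, so the ``additive accounting'' of Lemma~\ref{lem:entropy_finite_supp} does not carry over.
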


\begin{proof}[Proof of Lemma \ref{lem:entropy_continuous_q}]
We assume that $Q$ is Lebesgue continuous on $[a,b]$ with density bounded from above by $c < \infty$. Discretize the interval $[a,b]$ with a net of suitable size, namely, let $N' = \lceil (b-a)/\varepsilon \rceil$ and define
\begin{align*}
    t_j := a + (j-1)(b-a)/N', \quad h_{j}(x) := \int_{[a,t_j]} F(\alpha^{\top}x, t) \, dQ(t), \ j = 1, \dots, N'+1.
\end{align*}
The functions $h_j$ are contained in the class $\mathcal{G}$. Let $l_i, u_i$, $i = 1, \dots, N$ be $\varepsilon$-brackets for $\mathcal{G}$, such that $N_B(\varepsilon, \mathcal{G}, \|\cdot\|_{\mathbb{P}^{X|Y=y}}) \leq \tilde{C}/\varepsilon$ for all $y \in \mathrm{supp}(\mathbb{P}^Y)$. For $j = 1, \dots, m$ let $i(j)$ be an index such that $l_{i(j)} \leq h_j \leq u_{i(j)}$, and for $t \in (-\infty, b]$, define
\[
    r(t) := \max\{j \in \{1, \dots, N'+1\}\colon t_j \leq t\}, \quad s(t) := \begin{cases}
        \min(r(t) + 1, N'+1), & \ t \geq a, \\
        r(t), & \ t < a \\
    \end{cases},
\]
and the functions
\[
    L(x,y) := l_{i(r(y))}(x), \quad U(x,y) := u_{i(s(y))}(x), \quad y \in (-\infty,b],
\]
with $L(x,y) := U(x,y) := 0$ for $y > b$. Note that there are at most $N(N'+1)$ such functions for all choices of $r(y) \in \{1, \dots, N'+1\}$ and $i(j) \in \{1, \dots, N\}$, $j = 1, \dots, N'+1$. By construction, we have 
\[
    L(x,y) \leq \int_{[y, \infty]} F(\alpha^{\top}x, t) \, dQ(t) \leq U(x,y), \ y \in \mathbb{R}.
\]
We show that $L, U$ form an $\varepsilon$-bracket. First, notice that
\begin{align*}
    \|U-L\|_{\mathbb{P}}^2 & = \int_{\mathbb{R}^d\times\mathbb{R}} (U(x,y) - L(x,y))^2 \, d\mathbb{P}(x,y) \\
    & = \int_{\mathbb{R}^d\times\mathbb{R}} (u_{i(s(y))}(x) - l_{i(r(y))}(x))^2 \, d\mathbb{P}(x,y) \\
    & = \int_{\mathbb{R}}\int_{\mathbb{R}^d} (u_{i(s(y))}(x) - l_{i(r(y))}(x))^2 \, d\mathbb{P}^{X|Y=y}(x) d\mathbb{P}^Y(y).
\end{align*}
We separate the outer integral into three parts. The lower part, over $(-\infty,a)$, satisfies
\begin{align*}
\int_{-\infty}^a \int_{\mathbb{R}^d} (u_{s(y)}(x) - l_{r(y)}(x))^2 \, d\mathbb{P}^{X|Y=y}(x) \, d\mathbb{P}^Y(y) & = \int_{-\infty}^a \int_{-\infty}^a (u_{i(1)}(x) - l_{i(1)}(x))^2 \, d\mathbb{P}^{X|Y=y}(y) \, d\mathbb{P}^Y(x) \\
& \leq \int_{-\infty}^a (u_{i(1)}(x) - l_{i(1)}(x))^2 \, d\mathbb{P}^{X|Y=y}(y),
\end{align*}
since $l_{i(1)},u_{i(1)}$ are $\varepsilon$-brackets.
The upper part over $(b,\infty)$ equals $0$ because $L(x,y) = U(x,y) = 0$ for $y > b$. For the middle part over $[a,b]$, let $y$ in $[t_j, t_{j+1})$. Then,
\begin{align*}
   \int_{\mathbb{R}^d} (u_{i(s(y))}(x) - l_{i(r(y))}(x))^2 \, d\mathbb{P}^{X|Y=y}(x) & = \int_{\mathbb{R}^d} (u_{i(j+1)}(x) - l_{i(j)}(x))^2 \, d\mathbb{P}^{X|Y=y}(x),
\end{align*}
and we expand the integrand as follows
\begin{align} \label{eq:expand_the_square}
    & \int_{\mathbb{R}^d} (u_{i(j+1)}(x) - l_{i(j)}(x))^2 \,d\mathbb{P}^{X|Y=y}(x) \nonumber\\
    & = \int_{\mathbb{R}^d} (u_{i(j+1)}(x) - h_{j+1}(x) + h_{j+1}(x) - h_j(x) + h_j(x) - l_{i(j)}(x))^2 \,d\mathbb{P}^{X|Y=y}(x).
\end{align}
Since $l(i(k))$, $i = 1, \dots, N$ are $\varepsilon$-brackets, we have
\[
    \int_{\mathbb{R}^d} (u_{i(j+1)}(x) - h_{j+1}(x))^2 + (h_j(x) - l_{i(j)}(x))^2 \,d\mathbb{P}^{X|Y=y}(x) \leq 2\varepsilon^2,
\]
and also, because $t_{j+1}-t_j \leq (b-a)/\varepsilon$
\begin{align*}
    \int_{\mathbb{R}^d} (h_{j+1}(x) - h_j(x))^2 \,d\mathbb{P}^{X|Y=y}(x) & =     \int_{\mathbb{R}^d} \left(\int_{(t_j, t_{j+1}]} F(\alpha^{\top}x, t) \, dQ(t)\right)^2 \,d\mathbb{P}^{X|Y=y}(x) \\
    & \leq \int_{\mathbb{R}^d} \left(\int_{(t_j, t_{j+1}]} 1 \, dQ(t)\right)^2 \,d\mathbb{P}^{X|Y=y}(x) \\
    & \leq \int_{\mathbb{R}^d} (c\varepsilon)^2 \,d\mathbb{P}^{X|Y=y}(x) \\
    & \leq (c\varepsilon)^2. 
\end{align*}
The cross-terms can be bounded by applying the Cauchy-Schwarz inequality,
\begin{align*}
    & \int_{\mathbb{R}^d} (u_{i(j+1)}(x) - h_{j+1}(x))(h_{j+1}(x) - h_j(x)) \,d\mathbb{P}^{X|Y=y}(x) \\
    & \leq \left(\int_{\mathbb{R}^d} (u_{i(j+1)}(x) - h_{j+1}(x))^2d\mathbb{P}^{X|Y=y}(x)\right)^{1/2} \quad \left(\int_{\mathbb{R}^d} h_{j+1}(x) - h_j(x))^2d\mathbb{P}^{X|Y=y}(x)\right)^{1/2} \\
    & \leq \max(1, c)\varepsilon^2,
\end{align*}
applying the bounds from above; the other cross terms are bounded in an analogous way. Hence,
\begin{align*}
    \int_{[a,b)} \int_{\mathbb{R}^d} (u_{s(y)}(x) - l_{r(y)}(x))^2 \,d\mathbb{P}^{X|Y=y}d\mathbb{P}^Y & = \sum_{j=1}^{N'}\int_{[t_j, t_{j+1})} \int_{\mathbb{R}^d} (u_{s(y)}(x) - l_{r(y)}(x))^2 \,d\mathbb{P}^{X|Y=y}d\mathbb{P}^Y \\
    & \leq \sum_{j=1}^{N'}\int_{[t_j, t_{j+1})} 9\max(1,c^2)\varepsilon^2 \,d\mathbb{P}^Y \\
    & \leq 9\max(1,c^2)\varepsilon^2,
\end{align*}
where the factor $9$ is due to the fact that one obtains $3$ square terms and $6$ cross-terms from expanding the square in \eqref{eq:expand_the_square}. So we have
\[
    \int_{\mathbb{R}} \int_{\mathbb{R}^d} (u_{s(y)}(x) - l_{r(y)}(x))^2 \leq 9\max(1,c^2)\varepsilon^2.
\]
Consequently, we obtain
\[
    \log(N_B(\varepsilon, \mathcal{M}, \|\cdot\|_{\mathbb{P}})) \leq \frac{3\tilde{C}\max(1,c)}{\varepsilon} + \frac{b-a+1}{\varepsilon}
\]
\end{proof}

\subsection{Proof of Proposition \ref{prop:entropy}}

\begin{proof}
Fix $\varepsilon \in (0, 1)$.  By Lemma 21 of \citet{FeigeSchechtman2002}, we know that $\mathcal{S}_{d-1}$ can be partitioned into $N$ subsets of equal size with diameter at most $\varepsilon$ such that $N \le (A/\varepsilon^2)^d$, for a universal constant $A$. Let $\alpha_1, \ldots, \alpha_N$ be points in these $N$ subsets. Furthermore, from Theorem 2.7.5 of \citet{vanderVaartWellner1996}, we can find $N' \le \exp(K/\varepsilon)$ brackets $[h^L_i, h^U_i], i =1, \ldots, N'$ with respect to the norm $\Vert \cdot \Vert_{[-2R, 2R]}$.

Let $g \in \mathcal{G}$.  Then,  $g(x)  =  h(\alpha^{\top} x)$ for some $\alpha \in \mathcal{S}_{d-1}$ and $h \in \mathcal{H}$.  Let $i \in \{1, \ldots, N\}$ and $j \in \{1, \ldots, N'\}$ such that $\Vert \alpha - \alpha_i \Vert \le \varepsilon^2$  and $  h^L_j  \le  h \le h^U_j$.  Now, it follows from the Cauchy-Schwarz inequality that 
\begin{eqnarray*}
	\alpha^{\top} x =  (\alpha - \alpha_i)^{\top} x  + \alpha^{\top}_i x  \in [ \alpha^{\top}_i x - \varepsilon^2 R,   \alpha^{\top}_i x +\varepsilon^2 R ]  \subset [-2R, 2R]
\end{eqnarray*}
By monotonicity of $h$ this implies that
\begin{eqnarray*}
	h( \alpha^{\top}_i x + \varepsilon^2 R)  \le h(\alpha^{\top} x)  \le   h( \alpha^{\top}_i x - \varepsilon^2 R)
\end{eqnarray*}  
and hence
\begin{eqnarray}\label{Brack}
	h^L_j( \alpha^{\top}_i x + \varepsilon^2 R)  \le h(\alpha^{\top} x)  \le   h^U_j( \alpha^{\top}_i x - \varepsilon^2 R).
\end{eqnarray}  
Now, using the Minkowski inequality, we have that
\begin{eqnarray*}
	\left(\int_{\mathcal{X}}  \left \{ h^U_j( \alpha^{\top}_i x - \varepsilon^2 R)  -  h^L_j( \alpha^{\top}_i x + \varepsilon^2 R)  \right \}^2   \,dx  \right)^{1/2}  & \le   & \left(\int_{\mathcal{X}}  \left \{ h^U_j( \alpha^{\top}_i x - \varepsilon^2 R)  -  h( \alpha^{\top}_i x - \varepsilon^2 R)  \right \}^2   \,dx  \right)^{1/2} \\
	&&  +   \left(\int_{\mathcal{X}}  \left \{ h( \alpha^{\top}_i x - \varepsilon^2 R)  -  h( \alpha^{\top}_i x + \varepsilon^2 R)  \right \}^2   \,dx  \right)^{1/2} \\
	&& +  \left(\int_{\mathcal{X}}  \left \{ h^L_j( \alpha^{\top}_i x + \varepsilon^2 R)  -  h( \alpha^{\top}_i x + \varepsilon^2 R)  \right \}^2   \,dx  \right)^{1/2} \\
	& =: &  I_1 + I_2 + I_3.
\end{eqnarray*}
Note that for any $\alpha=(\alpha^{(1)}, \ldots, \alpha^{(d)})  \in \mathcal{S}_{d-1}$, there exists $j \in \{1, \ldots, d \}$ such that $\vert \alpha^{(j)}  \vert \ge 1/\sqrt{d}$. Without loss of generality we assume that $\vert \alpha^{(1)}_i \vert \ge 1/\sqrt{d}$. Consider the change of variable  $\varphi(x)  =  t$ where
\begin{eqnarray*}
	t_1 =  \alpha^{\top}_i  x  -  \varepsilon^2 R \  \  \textrm{and}  \  \ t_j =  x_j,  \  \ \textrm{for  $j=2, \ldots, d$}.
\end{eqnarray*}
Then, 
\begin{eqnarray*}
	I_1  & \le   &  \left(\int_{\varphi(\mathcal{X})}   \left\{  h^U_j(t_1)  -  h(t_1) \right \}^2  dt \frac{1}{\alpha^{(1)}_j}  \right)^{1/2}   \\
	& \le  &  \left(\sqrt{d}  \int_{-2R}^{R}  \int_{-R}^R  \ldots  \int_{-R}^R \left\{  h^U_j(t_1)  -  h(t_1) \right \}^2  dt\right)^{1/2}  \\
	& \le &  d^{1/4} (2R)^{(d-1)/2}  \left(\int_{-2R}^{R}  \left \{h^U_j(t_1)  -  h(t_1) \right \}^2  dt_1\right)^{1/2}  \\
	& \le &  d^{1/4} (2R)^{(d-1)/2}  \left(\int_{-2R}^{2R}  \left \{h^U_j(t_1)  -  h(t_1) \right \}^2  dt_1\right)^{1/2}  \\
	& \le &   d^{1/4} (2R)^{(d-1)/2}  \varepsilon,
\end{eqnarray*}
where above used that   $t_1 = \alpha_j^{\top} x - \varepsilon^2 R  \in [- 2R, R]$ for all  $x \in \mathcal{X}$. Using a similar reasoning, we can bound $I_3$ by the same constant. Now, we turn to $I_2$.  With the same change of variable, we have that
\begin{eqnarray*}
	\left(\int_{\mathcal{X}}  \left \{ h( \alpha^{\top}_i x - \varepsilon^2 R)  -  h( \alpha^{\top}_i x + \varepsilon^2 R)  \right \}^2   \,dx  \right)^{1/2} & \le   &  d^{1/4} (2R)^{(d-1)/2}   \left(  \int_{-2R}^{R}    \left\{  h(z)   -   h(z +  2  \varepsilon^2 R) \right \}^2 dz \right)^{1/2}  \\
	& \le & d^{1/4} (2R)^{(d-1)/2}   \left(  \int_{-2R}^{R}    \left\{   h(z) -  h(z +  2  \varepsilon^2 R) \right \} dz \right)^{1/2},
\end{eqnarray*}
using monotonicity of $h$ and that $h(z) - h(z+2\varepsilon^2R) \in [0,1]$ for all $z \in [-2R, R]$. Now,
\begin{eqnarray*}
	\int_{-2R}^{R}    \left\{   h(z) -  h(z +  2  \varepsilon^2 R) \right \} dz &  = &   \int_{-2R}^R  h(z) dz -  \int_{-2R}^{R}  h(z +  2  \varepsilon^2 R) dz \\
	& =&   \int_{-2R}^R  h(z) dz -  \int_{-2R + 2  \varepsilon^2 R}^{R+ 2  \varepsilon^2 R}  h(z) dz  \\
	& = &    \int_{-2R}^{-2R + 2  \varepsilon^2 R}  h(z)  dz    -   \int_{R}^{R  +  2  \varepsilon^2 R}  h(z) dz  \\
	& \le &  2 \varepsilon^2 R.
\end{eqnarray*}
Thus,
\begin{eqnarray*}
	\left(\int_{\mathcal{X}}  \left \{ h^U_j( \alpha^{\top}_i x - \varepsilon^2 R)  -  h^L_j( \alpha^{\top}_i x + \varepsilon^2 R)  \right \}^2   \,dx  \right)^{1/2} & \le  &  2  d^{1/4} (2R)^{(d-1)/2}  \varepsilon   +  d^{1/4} (2R)^{(d-1)/2}  \sqrt{2} \sqrt{R}  \varepsilon 
	\\
	& \le &  2 d^{1/4} (2R)^{(d-1)/2} (1 +  \sqrt{R})  \varepsilon.
\end{eqnarray*}
If we put $B = 2 d^{1/4} (2R)^{(d-1)/2} (1 +  \sqrt{R}) $, then the previous calculations and the inequality (\ref{Brack}) imply that  
\begin{eqnarray*}
	N_B(B \varepsilon, \mathcal{G}, \Vert \cdot \Vert_{\mathcal{X}})   \le   N N'
\end{eqnarray*} 
and hence
\begin{eqnarray*}
	\log\left(N_B(B \varepsilon, \mathcal{G}, \Vert \cdot  \Vert_{\mathcal{X}}) \right)  &\le  &   \log N  +  \log N'  \\
	& \le   &   d \log \frac{A}{\varepsilon^2}   +   \frac{2 K \sqrt{R}}{\varepsilon}   \\
	& =  &   2 d \log \frac{\sqrt{A}}{\varepsilon}   +   \frac{2 K \sqrt{R}}{\varepsilon}  \\
	& \le  &   \frac{2(d \sqrt{A} + K \sqrt{R})}{\varepsilon}
\end{eqnarray*} 
which in turn implies that
\begin{eqnarray*}
	\log\left(N_B(\varepsilon, \mathcal{G}, \Vert \cdot  \Vert_{\mathcal{X}}\right)  &\le  &   \frac{ 2^{(d+1)/2} d^{1/4} R^{(d-1)/2} (1 +  \sqrt{R}) (d \sqrt{A} + 2 K \sqrt{R})}{\varepsilon}.
\end{eqnarray*}
Finally, since the Lebesgue density of $\mu$ is bounded from above by $C$, the previous bound implies
\begin{eqnarray*}
	\log\left(N_B(\varepsilon, \mathcal{G}, \Vert \cdot  \Vert_{\mu}\right)  &\le  &   \frac{ 2^{(d+1)/2} d^{1/4} R^{(d-1)/2} (1 +  \sqrt{R}) (d \sqrt{A} + 2 K \sqrt{R})  C}{\varepsilon}.
\end{eqnarray*}
\end{proof}

\subsection{Proof of Theorem \ref{thm:separated_consistency}}
\begin{proof}
For simplicity of notation, index the subsequence by $n$, and choose an $\omega$ in the underlying probability space such that (\ref{eq:asconsistencyhard}) holds true. Recall that $\hat{F}_{n}$ is non increasing in the first entry and non decreasing in the second entry for every $n$. 
Lemma 2.5. in \cite{vanderVaart1998} can be adapted to this case. Therefore $\hat{F}_{n}$ converges pointwise along a subsequence to a bivariate function $G$ at each point of continuity of $G$ that lies in $\mathrm{supp}(Q)$. The limit $G$ has the property that $G(\cdot,t)$  is left continuous and non increasing for each $t \in \mathrm{supp}(Q)$ and $G(z, \cdot)$ non decreasing for every $z$. Furthermore, $\hat{\alpha}_n \in \mathcal{S}_{d-1}$ is a sequence in a compact space and hence converges along a further subsequence to $\beta_0$ in the Eudlidean distance. 

Our goal is to show that $G = F_0$  and $\alpha_0 = \beta_0$. Recall that if the $L_2$ distance between two functions is zero then they coincide almost surely. We have
\begin{align*}
    & \int_{\mathcal{X} \times \R} (G(\beta_0^{\top}x,t) - F_0(\alpha_0^{\top}x,t))^2 d\mathbb{P}^X(x)dQ(t) \\
&  = \int_{\mathcal{X} \times \R} \Big( G(\beta_0^{\top}x,t) - G(\hat{\alpha}_n^{\top}x,t)  + \hat{F}_n(\hat{\alpha}_n^{\top}x,t) -  F_0(\alpha_0^{\top}x,t) + G(\hat{\alpha}_n^{\top}x,t) -\hat{F}_n(\hat{\alpha}_n^{\top}x,t) \Big)^2 d\mathbb{P}^X(x)dQ(t) \\
&  \leq 3I_{n,1} + 3I_{n,2} + 3I_{n,3}
\end{align*}
by applying the Cauchy-Schwarz inequality, where 
\begin{align*}
	I_{n,1} &= \int_{\mathcal{X} \times \R} \left( G(\beta_0^{\top}x,t) - G(\hat{\alpha}_n^{\top}x,t) \right)^2 d\mathbb{P}^X(x)dQ(t),  \\
I_{n,2} &= \int_{\mathcal{X} \times \R} \left(  \hat{F}_n(\hat{\alpha}_n^{\top}x,t) -  F_0(\alpha_0^{\top}x,t)    \right)^2 d\mathbb{P}^X(x)dQ(t),\\
I_{n,3} &= \int_{\mathcal{X} \times \R} \left( G(\hat{\alpha}_n^{\top}x,t) -\hat{F}_n(\hat{\alpha}_n^{\top}x,t)  \right)^2 d\mathbb{P}^X(x)dQ(t).
\end{align*}
We show that for $n \to \infty$ the terms $I_{n,1}, \ I_{n,2}, \ I_{n,3}$ converge to zero almost surely, so  $G = F_0$ almost surely.

Recall that $\hat{\alpha}_n$ converges to $\beta_0$. Therefore, at all continuity points of $G_0$ we have that $G_0(\hat{\alpha}_n^{\top}x,t) $ converges to $G_0(\beta_0^{\top}x,t)$. Note that $G_0$ is bounded and monotone in both variables. \cite{Lavric1993} shows that the set of all discontinuity points of the bivariate, monotone function $G$  may not be countable but has Lebesgue measure $0$. When using that both $Q$ and $\mathbb{P}^X$ are equivalent to the Lebesgue measure, under our assumptions, we have that $I_{n,1} \to 0$ by Lebesgue's dominated convergence Theorem. The second integral $I_{n,2}$ converges to $0$ directly by (\ref{eq:asconsistencyhard}). Finally, we rewrite the third integral to
\begin{equation*}
	I_{n,3} = \int_{\mathcal{X} \times \R} \left( G(z,t) -\hat{F}_n(z,t)  \right)^2 d\mathbb{Q}_n(z)dQ(t)
\end{equation*}
where $\mathbb{Q}_n$ denotes the distribution of $\hat{\alpha}_n^{\top}X$ and $X$ is a random variable that is independent of the data, but has distribution $\mathbb{P}^X$. As at each point of continuity of $G$,  the function $\hat{F}_n$ converges to $G$ and the set of discontinuity points of $G$ has Lebesgue measure 0, Assumption \ref{assp:distribution_alphax} and Lebesgue's dominated convergence theorem imply that $I_{n,3} \rightarrow 0$. 

If necessary, modify $G$ to not have discontinuity points at the boundary. By Proposition \ref{prop:identifiability} it follows that $\beta_0 = \alpha_0$ and $G = F_0$ everywhere on $\mathcal{C}_{\alpha_0} \times \mathrm{supp}(Q)$. As we have found almost sure convergence along a subsequence, we follow that the statements hold true for convergence in probability. 
\end{proof}

\subsection{Proof of Theorem \ref{thm:separated_rate}}

\begin{proof}	
We apply Lemma 2.5. from \cite{MurphyVanderVaartWellner1999}. Rewrite the integrated error as follows,
\begin{align*}
		\int_{\mathcal{X}\times \R}  \left( \hat{F}_{n;\hat{\alpha}_n}(\hat{\alpha}_n^{\top}x,t) - F_0(\alpha_0^{\top}x,t) \right)^2 d\mathbb{P}^X(x) dQ(t)
& = \int_{\mathcal{X}\times \R}  \left(G_1(x,t) + G_2(x,t)\right)^2 d\mathbb{P}^X(x) dQ(t) \\
& = \mathbb{E}\left[( G_1(X,T) + G_2(X,T))^2 \right] 
\end{align*}
where the expectation is a shorthand notation of integrating with respect to a random variable $(X,T)$ whose distribution is the product measure of $\mathbb{P}^X$ and $Q$. The functions $G_1$ and $G_2$ are $G_1(x,t) = \hat{F}_{n}(\hat{\alpha}_n^{\top}x,t) - F_0(\hat{\alpha}_n^{\top}x,t) = \Tilde{G}_1(\hat{\alpha}_n^{\top}x,t)$ and $G_2(x,t) = F_0( \hat{\alpha}_n^{\top}x,t) - F_0(\alpha_0^{\top}x,t)$. The Cauchy-Schwarz inequality and the tower property of conditional expectations yield
\begin{align*}
	\mathbb{E}\left[G_1(X,T) G_2(X,T) \right]^2  
&= \mathbb{E}\left[\Tilde{G}_1(\hat{\alpha}_n^{\top}X,T) G_2(X,T) \right]^2 \\
&= \mathbb{E}\left[\Tilde{G}_1(\hat{\alpha}_n^{\top}X,T)\ \mathbb{E}[ G_2(X,T) \mid \hat{\alpha}_n^{\top}X,T]\right]^2 \\
& \leq \mathbb{E}\left[\Tilde{G}_1(\hat{\alpha}_n^{\top}X,T)^2 \right] \mathbb{E} \left[ \mathbb{E}[ G_2(X,T) | \ \hat{\alpha}_n^{\top}X,T]^2 \right] \\
& = c_n \mathbb{E}\left[G_1(X,T)^2 \right] \mathbb{E}\left[G_2(X,T)^2 \right],
\end{align*}
where 
\begin{equation*}
	c_n = \frac{\mathbb{E} \left[ \mathbb{E}[ G_2(X,T) | \ \hat{\alpha}_n^{\top}X,T]^2 \right]}{\mathbb{E}\Big[G_2(X,T)^2 \Big]}
= \frac{\mathbb{E} \left[( F_0( \hat{\alpha}_n^{\top}X,T) - \mathbb{E}[ F_0(\alpha_0^{\top}X,T) | \ \hat{\alpha}_n^{\top}X,T])^2 \right]}{\mathbb{E}\Big[( F_0( \hat{\alpha}_n^{\top}X,T) - F_0(\alpha_0^{\top}X,T))^2 \Big]}.
\end{equation*}

If $c_n <1 $ it follows by \cite{MurphyVanderVaartWellner1999} that 
\begin{align}
    & \int_{\mathcal{X} \times \R}  \Big( \hat{F}_{n}(\hat{\alpha}_n^{\top}x,t) - F_0(\alpha_0^{\top}x,t) \Big)^2 d\mathbb{P}^X(x) dQ(t) \nonumber \\
     & \geq (1 - \sqrt{c_n}) \bigg( \mathbb{E}\left[(\hat{F}_{n}(\hat{\alpha}_n^{\top}X,T) - F_0(\hat{\alpha}_n^{\top}X,T))^2 \right]
    + \mathbb{E}\left[(F_0( \hat{\alpha}_n^{\top}X,T) - F_0(\alpha_0^{\top}X,T))^2 \right] \bigg). \label{eq:int_lwr_bound}
\end{align}

We now prove that there exists a $c<1$ such that from any subsequence $(n_k)_{k\in\mathbb{N}}$, there exists a subsequence $(n_{k_l})_{l\in\mathbb{N}}$ along which $\limsup_{l \to \infty} c_{n_l} \leq c <1$ almost surely. This shows that $(1-\sqrt{c_n})^{-1} = O_P(1)$. 

To prove the claim, consider an arbitrary subsequence. For simplicity of notation, index it with $n$. Define $u_n = \lVert \hat{\alpha}_n - \alpha_0 \rVert$ and $\gamma_n =  (\hat{\alpha}_n - \alpha_0)/u_n $. As $\lVert \gamma_n \rVert = 1$ and $\mathcal{S}_{d-1} $ is compact, $\gamma_n$ converges to some $\gamma_0 \in \mathcal{S}_{d-1}$ along a subsequence. Recall that $\hat{\alpha}_n$ converges to $\alpha_0$ in probability. Therefore, we can extract a further subsequence along which the convergence from $\hat{\alpha}_n$  to $\alpha_0$ and from $\gamma_n$ to $\gamma_0$ happens almost surely. To make notation less cumbersome we index this subsequence again by $n$. Fix an event $\omega$ in the underlying probability space such that $\hat{\alpha}_0 \to \alpha_0$ and $\gamma_n \to \gamma_0$, so that we can consider $\hat{\alpha}_n$ and $\gamma_n$ as non-random. 

By Assumption \ref{assp:differentiability}, for every $t \in \R$  the map $F_0(\cdot,t)$ is continuously differentiable on $\mathcal{C}_{\alpha_0}$. Extend the function $F_0(\cdot,t)$ such that it is bounded and continuously differentiable on $\R$ and the partial derivative $z \mapsto F_0^{(1)}(z,t)$ is bounded on $\R^2$. By Taylor's Theorem we have that for $x \in \mathcal{X}$ and $t \in \R$,
\begin{equation}
\label{eq:taylorhard}
    F_0(\alpha_0^{\top}x,t) = F_0(\hat{\alpha}_n^{\top}x,t) + F_0^{(1)}(\hat{\alpha}_n^{\top}x,t)(\alpha_0 - \hat{\alpha}_n)^{\top}x + o(u_n).
\end{equation}
Thus the numerator of $c_n$ becomes
\begin{align*}
    & \mathbb{E} \left[ \mathbb{E}[ F_0( \hat{\alpha}_n^{\top}X,T) - F_0(\alpha_0^{\top}X,T) | \ \hat{\alpha}_n^{\top}X,T]^2 \right] \\
    & \ \ = \mathbb{E} \left[ \mathbb{E}[ F_0^{(1)}(\hat{\alpha}_n^{\top}X,T)(\alpha_0 - \hat{\alpha}_n)^{\top}X + o(u_n) | \ \hat{\alpha}_n^{\top}X,T]^2 \right] \\
    & \ \ = \mathbb{E} \left[ \mathbb{E}[ F_0^{(1)}(\hat{\alpha}_n^{\top}X,T)(\alpha_0 - \hat{\alpha}_n)^{\top}X | \ \hat{\alpha}_n^{\top}X,T]^2 \right] + o(u_n^2) \\
\end{align*}
as the mixed term can be controlled by
\begin{equation*}
    2 o(u_n) \ \Big|  \mathbb{E} \left[ F_0^{(1)}(\hat{\alpha}_n^{\top}X,T)(\alpha_0 - \hat{\alpha}_n)^{\top}X \right] \Big| = o(u_n^2).
\end{equation*}
This is because the partial derivative $z \mapsto F_0^{(1)}(z, t)$ is bounded. Similarly the denominator becomes
\begin{equation*}
    \mathbb{E}\left[( F_0( \hat{\alpha}_n^{\top}X,T) - F_0(\alpha_0^{\top}X,T))^2 \right] = \mathbb{E}\left[ (F_0^{(1)}(\hat{\alpha}_n^{\top}X,T)(\alpha_0 - \hat{\alpha}_n)^{\top}X)^2 \right] + o(u_n^2).
\end{equation*}
We rewrite
\begin{equation*}
    c_n = \frac{\mathbb{E} \left[( F_0^{(1)}(\hat{\alpha}_n^{\top}X,T) \gamma_n^{\top}\mathbb{E}[ X | \ \hat{\alpha}_n^{\top}X,T])^2 \right] + o(1)}{\mathbb{E} \Big[( F_0^{(1)}(\hat{\alpha}_n^{\top}X,T) \gamma_n^{\top}X)^2 \Big] + o(1)}.
\end{equation*}
By Lemma 9.1 in the supplement of \cite{BalabdaouiDurotJankowski2019} we have that $\mathbb{E}[ X | \ \hat{\alpha}_n^{\top}X,T] \to \mathbb{E}[ X | \ \alpha_0^{\top}X,T]$ almost surely. By Lebesgue's dominated convergence theorem and the continuity of $F_0^{(1)}(\cdot,t)$, we have that
\begin{align*}
    \limsup_{n\to \infty} c_n &= \frac{\mathbb{E} \left[( F_0^{(1)}(\alpha_0^{\top}X,T) \gamma_0^{\top}\mathbb{E}[ X | \ \alpha_0^{\top}X,T])^2 \right] }{\mathbb{E} \Big[( F_0^{(1)}(\alpha_0^{\top}X,T) \gamma_0^{\top}X)^2 \Big]} \\
    &= \frac{\gamma_0^{\top}\mathbb{E} \left[ F_0^{(1)}(\alpha_0^{\top}X,T) ^2 \mathbb{E}[ X | \ \alpha_0^{\top}X,T]\mathbb{E}[ X | \ \alpha_0^{\top}X,T]^{\top}\right] \gamma_0}{\gamma_0^{\top} \mathbb{E} \Big[F_0^{(1)}(\alpha_0^{\top}X,T)^2 X X^{\top} \Big] \gamma_0}.
\end{align*}
As $\hat{\alpha}_n \in \mathcal{S}_{d-1}$, it follows that  $1 = \lVert \hat{\alpha}_n \rVert =  \lVert \alpha_0 + u_n\gamma_n\rVert = \lVert \alpha_0 \rVert + u_n^2 +  2 u_n \langle \alpha_0  , \gamma_n \rangle$ and thus $2\langle \alpha_0  , \gamma_n \rangle = - u_n \to 0$ and $\langle \alpha_0  , \gamma_0 \rangle = 0$. Write
\begin{equation*}
    c = \sup_{\gamma \in \mathcal{S}_{d-1} : \langle \alpha_0  , \gamma \rangle = 0 } \frac{\gamma^{\top}\mathbb{E} \left[ F_0^{(1)}(\alpha_0^{\top}X,T) ^2 \mathbb{E}[ X | \ \alpha_0^{\top}X,T]\mathbb{E}[ X | \ \alpha_0^{\top}X,T]^{\top}\right] \gamma}{\gamma^{\top} \mathbb{E} \Big[F_0^{(1)}(\alpha_0^{\top}X,T)^2 X X^{\top} \Big] \gamma}.
\end{equation*}
Then, we have that $\lim_{n\to \infty} c_n \leq c$ where $c$ does not depend on the chosen path $\omega$. It remains to prove that $c<1$. We first expand the matrix in the denominator and get
\begin{align*}
    \mathbb{E} \Big[ F_0^{(1)}(\alpha_0^{\top}X,T)^2 \ X X^{\top}  \Big] 
        &= \mathbb{E} \Big[ F_0^{(1)}(\alpha_0^{\top}X,T)^2 \mathbb{E}[X | \ \alpha_0^{\top}X,T] \ \mathbb{E}[X | \ \alpha_0^{\top}X,T]^{\top} \Big] \\
        & \ \ \ \ \ +  \mathbb{E} \Big[ F_0^{(1)}(\alpha_0^{\top}X,T)^2 (X- \mathbb{E}[X | \ \alpha_0^{\top}X,T]) (X - \mathbb{E}[X | \ \alpha_0^{\top}X,T])^{\top} \Big]  \\
        &:= A + B.
\end{align*}
Note that $\gamma_0^{\top} A \gamma_0$ equals the numerator in the expression of $c$. Consider some $\gamma \in \mathcal{S}_{d-1}$ with $ \langle \alpha_0  , \gamma \rangle = 0 $. Define the $2\times d$ matrix $A_0$ to have first row equal to $\alpha_0^{\top}$ and second row equal $\gamma^{\top}$ and $Z = (Z_1,Z_2) = A_0 X$. Since $X$ has a density that is positive on $\mathcal{X}$, the variable $Z$ admits a density that is positive on the set $\mathcal{Z} := \{A_0x\colon x \in \mathcal{X}\}$, which has non-empty interior. Then,
\begin{align*}
     & \gamma^{\top} \mathbb{E} \big[F_0^{(1)}(\alpha_0^{\top}X,T)^2(X- \mathbb{E}[X | \ \alpha_0^{\top}X,T]) (X - \mathbb{E}[X | \ \alpha_0^{\top}X,T])^{\top} \big] \gamma \\
       & \quad = \mathbb{E} \big[F_0^{(1)}(\alpha_0^{\top}X,T)^2(\gamma^{\top}X- \mathbb{E}[\gamma^{\top}X | \ \alpha_0^{\top}X])^2 \big]  
\end{align*}
is equal to zero if and only if $\gamma^{\top}X = \mathbb{E}[\gamma^{\top} X| \ \alpha_0^{\top}X,T]$ almost surely or equivalently $Z_2 = \mathbb{E}[Z_2 | Z_1]$ almost surely. This would mean that the distribution of $Z$ is concentrated on a one-dimensional subspace. This contradicts the fact that the density of $Z$ with respect to the Lebesgue measure is positive on $\mathcal{Z}$. It follows that $\gamma^{\top}B\gamma > 0$ and thus $c<1$. This proves the claim. In integral notation, it follows from \eqref{eq:int_lwr_bound} that
\begin{align*}
	& \int_{\mathcal{X} \times \R}  \left( \hat{F}_n(\hat{\alpha}_n^{\top}x,t) - F_0(\alpha_0^{\top}x,t) \right)^2 d\mathbb{P}^X(x) dQ(t)  \\
& \geq (1 - \sqrt{c_n}) \Bigg( \int_{\mathcal{X} \times \R}(\hat{F}_n(\hat{\alpha}_n^{\top}x,t) - F_0(\hat{\alpha}_n^{\top}x,t))^2 d\mathbb{P}^X dQ(t) \\
& \quad + \int_{{\mathcal{X} \times \R}}(F_0( \hat{\alpha}_n^{\top}x,t) - F_0(\alpha_0^{\top}x,t))^2 d\mathbb{P}^X(x) dQ(t) \Bigg) \\
& \geq (1 - \sqrt{c_n}) \int_{\mathcal{X} \times \R}\left(F_0(\hat{\alpha}_n^{\top}x,t) - F_0(\alpha_0^{\top}x,t) \right)^2 d\mathbb{P}^X dQ(t) \\
& = (1-\sqrt{c_n}) \int_{\mathcal{X} \times \R}\left( F_0^{(1)}(\hat{\alpha}_n^{\top}x,t)(\alpha_0 - \hat{\alpha}_n)^{\top}x + o(u_n) \right)^2 d\mathbb{P}^X dQ(t) \\
& \geq c' \lVert \hat{\alpha}_n - \alpha_0 \rVert ^2 \inf_{\beta \in \mathcal{S}_{d-1}} \int_{\mathcal{X} \times \R} (\beta^{\top}x)^2 d\mathbb{P}^X(x) dQ(t),
\end{align*}
for some $c' > 0$ by the previous observations, for $n$ large enough. 
Note that the infimum above is strictly positive and achieved for some $\beta$, as the function $\beta \mapsto \int_{\mathcal{X} \times \R} (\beta^{\top}x)^2 \mathbb{P}^X(x) dQ(t)$ is continuous, $\mathcal{S}_{d-1}$ is compact and the density $p_X$ is bounded away from zero. Thus, there exists $K > 0$ such that
\begin{align*}
    \lVert \hat{\alpha}_n - \alpha_0 \rVert ^2
    \leq K \int_{\mathcal{X} \times \R} \left(\hat{F}_n(\hat{\alpha}_n^{\top}x,t) - F_0(\alpha_0^{\top}x,t) \right)^2 d\mathbb{P}^X dQ(t)
    = O_P(n^{-2/3})
\end{align*}
for large $n$ and almost surely.

We turn to the second part. Recall that the density of $\hat{\alpha}_n^{\top}X$ is bounded from below by $\munderbar{q} >0$, so
\begin{align}
\label{eq:lwr_bound_l2_both_alphahat}
    \int_\mathcal{X \times \R}  \left( \hat{F}_n(\hat{\alpha}_n^{\top}x,t) - F_0(\hat{\alpha}_n^{\top}x,t) \right)^2 d\mathbb{P}^X(x) dQ(t)
    & \geq \underline{q} \int_{C_{\hat{\alpha}_n} \times \R} \left( \hat{F}_n(z,t) - F_0(z,t) \right)^2 dz dQ(t) \\
    & \geq \underline{q} \int_\R \int_{\underline{c} +v_n}^{\overline{c} -v_n} \left( \hat{F}_n(z,t) - F_0(z,t) \right)^2 dz dQ(t)  \nonumber 
\end{align}
with probability tending to one for $n \to \infty$, using the definition of $v_n$ and that $\lVert \hat{\alpha}_n - \alpha_0 \rVert = O_P(n^{-1/3})$. The left-hand side of \eqref{eq:lwr_bound_l2_both_alphahat} can be bounded from above by
\begin{align*}
	\int_{\mathcal{X} \times \R}  \left( \hat{F}_n(\hat{\alpha}_n^{\top}x,t) - F_0(\hat{\alpha}_n^{\top}x,t) \right)^2 d\mathbb{P}^X(x) dQ(t)
& \leq 2 \int_{\mathcal{X} \times \R}  \left( \hat{F}_n(\hat{\alpha}_n^{\top}x,t) - F_0(\alpha_0^{\top}x,t) \right)^2 d\mathbb{P}^X(x) dQ(t) \\
& \quad +2 \int_\mathcal{X \times \R}  \left( F_0(\hat{\alpha}_n^{\top}x,t) - F_0(\alpha_0 ^{\top},t) \right)^2 d\mathbb{P}^X(x) dQ(t)  .
\end{align*}
The first term is bounded $O_P(n^{-2/3})$ by Theorem \ref{thm:bundled} and the seconded term can be handled due to the fact that the absolute value of the partial derivative $F_0^{(1)}(z,t)$ is bounded by $K := \sup_{t\in\mathrm{supp}(Q)} K_t$; this yields
\begin{align*}
    \int_{\mathcal{X} \times \R}  \left( F_0(\hat{\alpha}_n^{\top}x,t) - F_0(\alpha_0^{\top}x,t) \right)^2 d\mathbb{P}^X(x) dQ(t) 
& \leq K^2 \int_{\mathcal{X} \times \R} ((\alpha_0 - \hat{\alpha}_n)^{\top} x)^2 d\mathbb{P}^X(x) dQ(t)\\
& \leq K^2 R^2 \lVert { \alpha_0 - \hat{\alpha}_n } \rVert ^2 \\
& = O_P(n^{-2/3}).
\end{align*}

\end{proof}

\subsection{Identifiability} \label{sec:identifiability}
The identifiability result in this section is a direct adaptation of Theorem 5.1 of \citet{BalabdaouiDurotJankowski2019}.

\begin{proposition}
\label{prop:identifiability}
    Assume $\mathcal{X} \subset \R^d$ is convex and has at least one interior point. Furthermore, assume $X$ has a density with respect to the Lebesgue measure which is strictly positive on $\mathcal{X}$. Suppose that for each $t \in \mathrm{supp}(Q)$ the function $F_0(\cdot, t)$ is left-continuous (or right-continuous), non constant and does not have discontinuity points on the boundary of $\mathcal{C}_{\alpha_0}$.
    Then $(F_0,\alpha_0)$ is identifiable.
\end{proposition}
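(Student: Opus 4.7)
The plan is to show that any $(F,\alpha) \in \mathcal{F}$ for which $F(\alpha^{\top}x, t) = F_0(\alpha_0^{\top}x, t)$ for $\mathbb{P}^X$-a.e.\ $x$ and every fixed $t \in \mathrm{supp}(Q)$ must satisfy $\alpha = \alpha_0$ and $F = F_0$ on $\mathcal{C}_{\alpha_0} \times \mathrm{supp}(Q)$. I would proceed in three steps, all carried out for a fixed $t \in \mathrm{supp}(Q)$: first show that $\alpha$ and $\alpha_0$ are collinear, then exclude $\alpha = -\alpha_0$, and finally upgrade the resulting a.e.\ equality of $F$ and $F_0$ to pointwise equality on $\mathcal{C}_{\alpha_0}$.

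For the first step, suppose for contradiction that $\alpha$ and $\alpha_0$ are linearly independent. I would complete $(\alpha, \alpha_0)$ to a basis of $\mathbb{R}^d$ and change variables via $\Psi(x) = (z_1, z_2, u) = (\alpha^{\top}x, \alpha_0^{\top}x, u(x))$. Since $\mathcal{X}$ is convex with non-empty interior and $X$ has a density positive on $\mathcal{X}$, the image $\Psi(\mathcal{X})$ is convex with non-empty interior in $\mathbb{R}^d$, and on it the identity $F(z_1, t) = F_0(z_2, t)$ holds Lebesgue-a.e. By Fubini, for a generic fixed $u$, the slice $\Omega_u = \{(z_1,z_2) \colon (z_1,z_2,u) \in \Psi(\mathcal{X})\}$ is a convex planar set with non-empty interior on which the a.e.\ identity still holds. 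Any open rectangle $(z_1^{*}-\varepsilon, z_1^{*}+\varepsilon) \times (z_2^{*}-\varepsilon, z_2^{*}+\varepsilon) \subset \Omega_u$ then forces $F_0(\cdot, t)$ to be Lebesgue-a.e.\ constant on $(z_2^{*}-\varepsilon, z_2^{*}+\varepsilon)$: for a.e.\ fixed $z_1$ in that rectangle, $F_0(z_2, t)$ equals the single value $F(z_1, t)$ for a.e.\ $z_2$, and by monotonicity of $F_0(\cdot,t)$, a.e.\ constancy on a non-degenerate interval implies constancy on that interval. Varying the base point $x$ over $\mathcal{X}^{\circ}$ produces such a constancy interval around every $z_2^{*} \in \mathcal{C}_{\alpha_0}^{\circ}$, and since $\mathcal{C}_{\alpha_0}^{\circ}$ is a connected interval, local constancy implies that $F_0(\cdot,t)$ is constant on $\mathcal{C}_{\alpha_0}$, contradicting the non-constancy assumption. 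Hence $\alpha = \pm \alpha_0$.

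To rule out $\alpha = -\alpha_0$, I would observe that under this sign the identity becomes $F(-z, t) = F_0(z, t)$ for Lebesgue-a.e.\ $z \in \mathcal{C}_{\alpha_0}$, since $X$ having a density positive on convex $\mathcal{X}$ makes $\alpha_0^{\top}X$ admit a density on $\mathcal{C}_{\alpha_0}$. The left-hand side is non-decreasing in $z$ while the right-hand side is non-increasing, so both must be a.e.\ and hence, by monotonicity, everywhere constant on $\mathcal{C}_{\alpha_0}$, again contradicting the non-constancy of $F_0(\cdot, t)$. Therefore $\alpha = \alpha_0$, so $F(z, t) = F_0(z, t)$ for Lebesgue-a.e.\ $z \in \mathcal{C}_{\alpha_0}$. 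To upgrade this to pointwise equality on $\mathcal{C}_{\alpha_0}$, I would combine the left-continuity (or right-continuity) of $F_0(\cdot, t)$ with the assumed absence of discontinuities on the boundary of $\mathcal{C}_{\alpha_0}$ and the monotonicity of both functions, approaching each $z \in \mathcal{C}_{\alpha_0}$ from the appropriate side along the dense set on which the a.e.\ equality holds. The main obstacle I expect is Step~1: producing genuine open rectangles in $\Omega_u$ via a careful Fubini slicing, and then patching the resulting local constancy intervals together into global constancy on $\mathcal{C}_{\alpha_0}^{\circ}$, both of which lean essentially on $\mathcal{X}$ being convex with non-empty interior.
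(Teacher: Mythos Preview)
Your argument is correct and takes a genuinely different route from the paper. The paper (following Balabdaoui--Durot--Jankowski) works inside a small ball $B_L \subset \mathcal{X}$, first uses left-continuity to upgrade the a.e.\ identity to a pointwise one on $B_L$, and then---assuming $\alpha \neq \pm\alpha_0$---exploits the signs of $(\alpha_0-\alpha)^\top\alpha$ and $(\alpha_0-\alpha)^\top\alpha_0$ together with monotonicity to compare values of $f$ and $h$ along the rays $z\alpha$ and $z\alpha_0$, eventually deriving a contradiction at a point of strict decrease via several explicit case distinctions. Your change-of-variables plus Fubini argument is more geometric: linear independence of $\alpha$ and $\alpha_0$ lets you decouple $z_1 = \alpha^\top x$ from $z_2 = \alpha_0^\top x$, and the identity $F(z_1,t)=F_0(z_2,t)$ on an open box forces $F_0(\cdot,t)$ to be locally (hence, by connectedness of $\mathcal{C}_{\alpha_0}^{\circ}$, globally) constant. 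This is conceptually cleaner and avoids the case analysis; the paper's approach, on the other hand, needs no auxiliary coordinates and stays entirely in the original variables. Your exclusion of $\alpha=-\alpha_0$ is essentially the same as the paper's.

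One small point to tighten in Step~3: to pass from $F(z,t)=F_0(z,t)$ for a.e.\ $z$ to equality everywhere on $\mathcal{C}_{\alpha_0}$, monotonicity of $F$ alone is not enough---a monotone function can be modified at a single point without changing its a.e.\ values. You need the competing $F(\cdot,t)$ to satisfy the same one-sided continuity as $F_0(\cdot,t)$, which is the standard reading of identifiability (both candidates obey the model's regularity conditions) and is exactly what the paper imposes when it writes ``Consider pairs $(F,\alpha),(H,\beta)\in\mathcal{F}$ having the property that \dots\ are left-continuous''. With that in place, two left-continuous monotone functions that agree a.e.\ agree everywhere, and your Step~3 goes through.
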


\begin{proof}
    We will prove the left-continuous case; the right-continuous case can be treated with the same arguments. Consider pairs $(F,\alpha), (H,\beta) \in \mathcal{F}$ having the property that for each $t \in \mathrm{supp}(Q)$, the functions $F(\cdot,t)$ on $\mathcal{C}_{\alpha}$ and $H(\cdot, t)$ are left-continuous on $\mathcal{C}_{\beta}$, non constant and do not have discontinuity points on the boundary of their domain. Assume
    \begin{equation*}
        F(\alpha^Tx,t) = H(\beta^Tx,t)
    \end{equation*}
    for $\mathbb{P}^X$ almost all $x \in \mathbb{R}^d$. Fix $t_0 \in \mathbb{R}$ and define $f = F(\cdot, t_0)$ and $h = H(\cdot, t_0)$. By assumption we have $f(\alpha^Tx) = h(\beta^Tx)$ for almost every $x \in \mathcal{X}$. As $f,h$ are left-continuous, this holds for all points in the interior of $\mathcal{X}$. If we prove $\alpha = \beta$ we can follow that $f = h$ on the interior of $\mathcal{C}_{\alpha} = \mathcal{C}_{\beta}$. As there are no discontinuity points on the boundary, $f=h$ holds everywhere on $\mathcal{C}_{\alpha}$ and finally, so $F = H$ on $\mathcal{C}_{\alpha} \times \R$.  Therefore, it suffices to show $\alpha = \beta$.

    As $\mathcal{X}$ is convex, for $L>0$ small enough we can find an open ball $B_L$ of radius $L$ contained in $\mathcal{X}$ such that $x \mapsto f(\alpha^Tx)$ is non constant and 
    \begin{equation}
    \label{eq:baseassump}
        f(\alpha^Tx) = h(\beta^Tx) 
    \end{equation} for every $x \in B_L$. Without loss of generality, we assume that $B_L$ is centered at the origin --- if necessary, replace $f(z)$ with $f(z- \alpha^Tx_0)$ and $h(z)$ with $h(z-\beta^Tx_0)$, where $x_0$ is the center of a ball with the desired properties.
    We first show $\beta \in \{\alpha, -\alpha\}$ and then $\beta \neq -\alpha$.

    Assume for a contradiction that $\beta \notin \{ \alpha, -\alpha \}$. Then $\alpha$ and $\beta$ are linearly independent and by the Cauchy-Schwarz inequality for $v = \beta -\alpha$, it holds $v^T\alpha = \beta^T\alpha - 1 < 0$ and $v^T\beta > 0$. Using the monotonicity of $f$ and $h$ it follows that 
    \begin{align*}
        f(z) = f(\alpha^T(z \alpha )) = h(\beta^T(z \alpha )) = h(\alpha^T(z \alpha) + v^T(z \alpha)) \geq h(z), \\
        h(z) = h(\beta^T(z  \beta)) = f(\alpha^T(z \beta)) = f(\beta^T(z \beta) - v^T(z \beta)) \geq f(z),
    \end{align*}
    for each $z \in [0,L)$ and so $f(z) = h(z)$ on $[0,L)$. By the same arguments one shows $f(-z) = h(-z)$ on $[0,L)$, and so $f = h$ on $(-L,L)$. Hence, for $x \in B_L$ we have
    \begin{equation}
    \label{eq:B_L}
        f(\alpha^Tx) = f(\beta^Tx).
    \end{equation}
    Since $x \mapsto f(\alpha^Tx)$ is non-constant on $B_L$, there exists a point $b \in (-L,L)$  of strict decrease, so one of the following two conditions must hold,
    \begin{align} \label{eq:identifiability_case_minus}
    & f(b) > f(b+\epsilon), \ \epsilon \in (0,L-b); \\
    \label{eq:identifiability_case_plus}
    & f(b-\epsilon) > f(b), \ \epsilon \in (0,L+b).
    \end{align}
    The ball $B_L$ can be chosen in such a way that $b \neq 0$. 
    In the case \eqref{eq:identifiability_case_minus}, if $b>0$ we can choose $\epsilon$ small enough such that for $x:= (b+\epsilon)\beta$ it holds $x \in B_L$ and $\alpha^Tx \leq b$, since $\alpha^T\beta <1$. Then, we have
    \begin{equation*}
        f(\alpha^Tx) \geq f(b) > f(b+\epsilon) = f(\beta^Tx),
    \end{equation*}
    which contradicts \eqref{eq:B_L}.
    If $b<0$  we let $x = b\alpha$ and choose $\epsilon$ sufficiently small such that $b+\epsilon <0$ and $\beta^Tx = b \beta^T \alpha \geq b+\epsilon$. Then,
    \begin{equation*}
        f(\alpha^Tx) = f(b) > f(b+\epsilon) \geq f(\beta^Tx),
    \end{equation*}
    which contradicts \eqref{eq:B_L}, again. The second case, \eqref{eq:identifiability_case_plus}, can be proven with similar ideas. Namely, if $b<0$ choose $x = (b-\epsilon)\beta $ and $\epsilon$ small enough such that $\alpha^T\beta (b-\epsilon) \geq b$. Then,
    \begin{equation*}
        f(\beta^Tx) = f(b-\epsilon) \geq f(b) \geq f(\alpha^Tx)
    \end{equation*}
    which contradicts \eqref{eq:B_L}. If $b>0$ choose $x = b\alpha$ and $\epsilon$ small enough such that $b\alpha^T\beta \leq b-\epsilon$. Then,
    \begin{equation*}
        f(\beta^Tx) \geq f(b-\epsilon) \geq f(b) = f(\alpha^Tx)
    \end{equation*}
    which contradicts \eqref{eq:B_L}. This proves $\beta \in \{-\alpha,\alpha\}$.    
    
    Finally, we assume for a contradiction that $\beta = -\alpha$. For $z\in [0,L)$ we have
    \begin{equation*}
        f(z) = f(\alpha^T (z \alpha)) = h(\beta(z \alpha)) = h(-z),
    \end{equation*}
    by \eqref{eq:baseassump}. With the same argument one shows 
    \begin{equation*}
        h(z) = h(\beta(z \beta)) = f(\alpha(z \beta)) = f(-a).
    \end{equation*}
    Thus by monotonicity of $h$ we have for $z \in [0,L)$,
    \begin{equation*}
        f(z) = h(-z) \geq h (z) = f(-z)
    \end{equation*}
    and so $f(z) = f(-z)$ on $[0,L)$. As $f$ is also non-increasing, we conclude that $f$ is constant on $(-L,L)$, a contradiction. Consequently, $\alpha = \beta$ and Proposition \ref{prop:identifiability} follows.
\end{proof}

\subsection{Proof of Lemma \ref{lem:Invariance}}
\begin{proof}
Replacing $Y_1, \dots, Y_n$ by $f(Y_1), \dots, f(Y_n)$ in \eqref{eq:optimal_F} and the fact that $1\{Y_i \leq Y_j\} = 1\{f(Y_i) \leq f(Y_j)\}$ almost surely for $i,j=1,\dots,n$ imply $L_n(\mathbb{P}_n^{Y}; \hat{F}_{n, \hat{\alpha}_n}, \hat{\alpha}_n) = L_n(\mathbb{P}_n^{f(Y)}; \tilde{F}_{n, \hat{\alpha}_n}, \hat{\alpha}_n)$, which also yields the statement about the minimizers in (i). Part (ii) holds by definition of $t_i$, $i = 1, \dots, n$, $\tilde{F}_{n,\hat{\alpha}_n}$, and $\tilde{F}_0$.
\end{proof}

\end{document}